 \documentclass[12pt,a4paper]{amsart}
\usepackage{times,amsfonts,amsmath,amstext,amsbsy,
  amsopn,amsthm,upref,a4} 
\usepackage[T1]{fontenc}

\newtheorem{theorem}{Theorem}[section]
\newtheorem{lemma}[theorem]{Lemma}
\newtheorem{addendum}[theorem]{Addendum}
\newtheorem{corollary}[theorem]{Corollary}

\newtheorem{rmk}[theorem]{Remark}

\theoremstyle{definition}
\newtheorem{definition}[theorem]{Definition}

\newcommand{\field}[1]{\mathbb{#1}}
\newcommand{\R}{\field{R}}
\newcommand{\N}{\field{N}}
\newcommand{\C}{\field{C}}

\newcommand{\Q}{\field{Q}}
\newcommand{\T}{\field{T}}

\newcommand{\Cal}{\mathcal}

\newcommand{\pref}[1]{(\ref{#1})}

\title[ Existence of  a Periodic Orbit for Billiards in Polygons]{ Existence of  a Periodic Orbit \\
for Billiards in Polygons}

\author{Giovanni Forni}
\address{Department of Mathematics\\
  University of Maryland\\
  College Park, MD  20742-4015, U.S.A.}

\email{gforni@math.umd.edu}

\thanks{ }

\date{\today}

\keywords{Polygonal billiards, Flat surfaces, Geodesic flows}

\subjclass{ }

\begin{document}

\begin{abstract}
  {We prove that the billiard flow in any finite polygon has at least one periodic orbit.
  The proof by contradiction is based on a fundamental result on the dynamics of the billiard flow 
  by Galperin, Kr\"uger and Troubetzkoy, on  the geometry of a one-parameter scaling of the natural Riemannian metric on the unit tangent bundle, and 
  on  the topology of the skeleton or cut-locus of the scaled metrics.}
\end{abstract}

\maketitle

\section{Introduction}

In this paper we present a non-constructive solution to the problem of existence of a periodic orbit in a polygonal billiard or more generally for the geodesic
flow on a flat compact surface with conical singularities:

\begin{theorem}
\label{thm:A}
Let $\Cal P\subset \R^2$ be any finite bounded polygon. The billiard flow in $\Cal P$ has 
at least one (regular) periodic orbit.
\end{theorem}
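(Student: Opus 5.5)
We argue by contradiction, following the strategy announced in the abstract. Suppose the billiard flow in $\Cal P$ has no regular periodic orbit. Unfolding $\Cal P$ (or working directly with the phase space) we pass to the unit tangent bundle $M$ of the associated flat surface with conical singularities, or equivalently the billiard phase space with the singular set $\Sigma$ of orbits hitting vertices removed. The key dynamical input is the theorem of Galperin, Kr\"uger and Troubetzkoy: if there is no periodic orbit, then every regular orbit is dense in the phase space, or at least its closure has nonempty interior. Moreover, its $\omega$-limit set contains a vertex, i.e.\ generalized diagonals abound. We use the version giving that, in the absence of periodic orbits, the forward orbit of every regular point accumulates on the singular set in a controlled way. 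In particular, there are no "trapped" invariant open sets of regular orbits avoiding a neighborhood of $\Sigma$.

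Next we introduce on the (compactified) phase space a one-parameter family of Riemannian metrics $g_t$, obtained by scaling the natural Sasaki-type metric by $e^{t}$ in the flow direction and by $e^{-t}$, or some fixed factor, in the transverse directions. For each $t$ we consider the distance function $d_t(\cdot,\Sigma)$ to the singular set and its skeleton (cut-locus) $S_t$, the set of points having at least two minimizing $g_t$-geodesics to $\Sigma$. Standard results show that $S_t$ is a closed set onto which the complement of $\Sigma$ deformation retracts. Hence $S_t$ carries the topology of $M\setminus\Sigma$: it is nonempty, and it has nontrivial one-dimensional homology, since $M\setminus\Sigma$ has nontrivial $H_1$.

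The heart of the argument is to analyze $S_t$ as $t\to\infty$. As the scaling increases, minimizing geodesics to $\Sigma$ become nearly tangent to flow lines. Points of $S_t$ are then those whose forward and backward billiard trajectories reach the singular set at comparable, maximal times. Using compactness, we extract a limit $S_\infty$ of the skeletons in the Hausdorff topology. We show that $S_\infty$ is flow-invariant and consists of regular points whose orbits stay at a positive distance from $\Sigma$, or more precisely whose first hitting times of $\Sigma$ are unbounded. By the Galperin--Kr\"uger--Troubetzkoy result, such a nonempty closed invariant set avoiding vertices must contain a periodic orbit, or a cylinder of periodic orbits, which contradicts our assumption.

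The main obstacle is this last step: guaranteeing that the limit of the skeletons is nonempty and genuinely avoids $\Sigma$, rather than collapsing into the singular set. Hausdorff limits could lose the topological information. I would first try to use the nontrivial homology class carried by $S_t$ uniformly in $t$, pairing it with a fixed closed $1$-form such as one built from the angle coordinate. This forces points of $S_t$ to stay at $g_0$-distance bounded below from $\Sigma$, and to have hitting times tending to infinity, which gives the needed invariant set.
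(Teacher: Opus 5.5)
There is a genuine gap, and it is exactly the step you flag as the main obstacle. Nothing in your plan produces a nonempty closed flow-invariant set whose orbits stay away from $\Sigma$. By the Galperin--Kr\"uger--Troubetzkoy theorem, the existence of such a set is \emph{equivalent} to the existence of a regular periodic orbit. (What GKT actually says is that every non-periodic forward orbit has a cone point in the closure of its projection, not that orbits are dense.) So asserting these properties for $S_\infty$ is asserting the theorem. You give no mechanism for invariance either. Once the flow direction is made cheap, the skeleton is heuristically the locus where two approaches to $\Sigma$ tie (say forward and backward), which is a set transverse to the flow. Nothing prevents its Hausdorff limit from being, for example, the whole phase space: invariant, but containing $\Sigma$. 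The ``unbounded hitting times'' heuristic also points the wrong way. Under the no-periodic-orbit hypothesis, GKT plus upper semicontinuity give a \emph{uniform} bound $H(\epsilon)$ on the time within which every strip of width $\epsilon$ runs into a cone point (Corollary~\ref{cor:maxheigth}).

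Your proposed remedy via homology does not help. For a simply connected polygon the doubled surface is a sphere. Then $H_1(\hat S_r(M),\R)$ is spanned by the fiber class and lifts of small loops around the cone points, all of which live in arbitrarily thin collars of the boundary tori. A nonzero period of a closed $1$-form on a cycle in $S_t$ therefore gives no lower bound on the distance of $S_t$ from $\Sigma$. (Connectedness of the retract together with $\# C_r\ge 3$ does force \emph{some} point of $S_t$ to stay at definite distance from $\Sigma$, but one point says nothing about its orbit.)

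The paper uses the hypothesis in the opposite direction. It takes the specific scaling $X_s=e^sX$, $Y_s=Y$, $\Theta_s=e^{-s}\Theta$, chosen so that the structure constants stay bounded. With this scaling, the bound $H(\epsilon)$ makes the $R_s$-distance of every point to $\partial\hat S_r(M)$ tend to $0$, so the skeleton collapses onto the boundary. For large $s$ the cut locus is then described as a simple branched surface whose regular pieces correspond to saddle connections. Since there are infinitely many saddle connections, $\dim H_1(\mathcal C(s),\R)\to\infty$. This contradicts the bound $2g+\#C_r$ coming from the Vainshtein--Efremovich--Loginov retraction. The contradiction is a Betti-number count, not an invariant set, and your outline contains no substitute for that mechanism.
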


This theorem is obtained as a corollary of the following:

\begin{theorem}
\label{thm:B}
Let $(M,r)$ be any closed flat surface with a finite set of conical 
singularities. The geodesic flow on $(M,r)$ has at least one (regular) periodic orbit.
\end{theorem}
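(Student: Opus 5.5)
We argue by contradiction, following the strategy announced in the abstract. Suppose the geodesic flow on the flat surface $(M,r)$ with conical singularities $\Sigma=\{p_1,\dots,p_k\}$ has no regular periodic orbit. The first step is to invoke the theorem of Galperin, Kr\"uger and Troubetzkoy on the structure of orbits of polygonal billiards, extended to flat surfaces with conical points. It says that the closure of every regular non-periodic forward orbit contains a singular point. In fact, for every unit direction field on an open set, the generalized diagonals (saddle connections) are dense in the appropriate sense. Under our contradiction hypothesis this yields the following: every geodesic ray in the unit tangent bundle $T_1(M\setminus\Sigma)$ accumulates on the fibres over $\Sigma$, and no open set of directions avoids $\Sigma$ for all time. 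In particular, every point of $T_1M$ lies at bounded flow-distance from a trajectory that hits a singularity, which is the property we intend to exploit.

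The second step is geometric. On $T_1(M\setminus\Sigma)$ we take the natural Sasaki-type metric and rescale it by a one-parameter family $g_\epsilon$. Concretely, we keep the fibre (angle) direction fixed and multiply the horizontal directions by $\epsilon$, or, equivalently, we shrink along the flow and enlarge transversally, so that as $\epsilon\to 0$ distances measure how long one must flow before reaching a singular fibre. For each $\epsilon$ we consider the distance function $d_\epsilon(\cdot,S)$ from the singular set $S=\pi^{-1}(\Sigma)$ and its cut-locus (skeleton) $K_\epsilon$. This is the set of points with at least two minimizing $g_\epsilon$-geodesics to $S$. Its complement $T_1M\setminus(S\cup K_\epsilon)$ retracts along minimizing geodesics onto a neighbourhood of $S$. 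Hence $K_\epsilon$ is a deformation retract of $T_1M\setminus S$ minus a tubular neighbourhood of $S$, and it carries the topology of that complement, which is a 3-manifold with nontrivial $H_1$ and $H_2$ coming from the genus of $M$ and the Euler class of the circle bundle.

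The third and main step, which I expect to be the main obstacle, is to show that as $\epsilon\to 0$ the skeleta $K_\epsilon$ must contain long pieces nearly tangent to the geodesic flow. These pieces are points equidistant to $S$ along two distinct short connections, and their limits would produce a flow-invariant closed set disjoint from $S$. Such a set carries a recurrent orbit that stays away from $\Sigma$ in a uniform way. Using flatness, such an orbit lies in a cylinder of parallel closed geodesics, which gives a regular periodic orbit and contradicts the hypothesis. To carry this out I would first try to obtain a uniform lower bound on $d_\epsilon(K_\epsilon,S)$ in flow-time units from the topological nontriviality of $K_\epsilon$: the skeleton cannot collapse into $S$ because it carries nonzero homology. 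I would then try to extract the invariant limit set by a Hausdorff-limit argument. The second Galperin--Kr\"uger--Troubetzkoy consequence, that every orbit quickly reaches $S$, is exactly what should force $d_\epsilon(K_\epsilon,S)\to 0$. Combined with the topological lower bound, this gives the contradiction. Theorem~\ref{thm:A} then follows from Theorem~\ref{thm:B} by the unfolding construction for rational polygons. For irrational polygons it follows by applying the argument directly to the billiard phase space, where the Galperin--Kr\"uger--Troubetzkoy theorem is originally stated.
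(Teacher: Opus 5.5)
Your Steps 1--2 match the paper's set-up. There, the Galperin--Kr\"uger--Troubetzkoy theorem shows that, with no periodic orbit, the maximal height $H(\epsilon)$ of a flat strip of width $\epsilon$ free of cone points is finite. The Vainshtein--Efremovich--Loginov theorem then makes the skeleton of the rescaled metric a deformation retract of $\hat S_r(M)$.

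Your two descriptions of the rescaling are not equivalent, however. Shrinking both horizontal directions by $\epsilon$ with the fibre fixed just gives the unit tangent bundle of $(M,\epsilon r)$. There every point is within $\epsilon\,\mathrm{diam}(M,r)$ of the blown-up fibres over $\Sigma$, whatever the dynamics. The paper instead uses the anisotropic, volume-preserving frame $X_s=e^sX$, $Y_s=Y$, $\Theta_s=e^{-s}\Theta$. In it, a short path to the boundary must run along the flow with small transverse displacement and almost no rotation, and only then does $H(\epsilon)<\infty$ give a relative diameter tending to zero.

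The genuine gap is Step 3, which offers two contradiction mechanisms; neither works.

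First, there is no ``topological lower bound'' on the distance from $K_\epsilon$ to $S$. Nonzero homology is compatible with lying in an arbitrarily thin collar of $S$. The boundary tori themselves carry nontrivial $H_1$ and $H_2$, and a thin tube around a closed curve has its core circle as skeleton, at distance equal to the radius from the boundary. Under the contradiction hypothesis the paper in fact shows that all of $\hat S_r(M)$, hence $\mathcal C(s)$, lies within $R_s$-distance $d_s(\hat S_r(M),\partial\hat S_r(M))\to0$ of the boundary. Yet $\mathcal C(s)$ is still a deformation retract of the whole manifold, so a skeleton hugging $S$ is the expected picture, not a contradiction. If your bound is meant in flow-time units, it does not conflict with $d_\epsilon\to0$ at all, since that is measured in the rescaled metric; in flow-time, GKT only gives the upper bound $H(\epsilon)$.

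Second, extracting from the skeleta a closed flow-invariant set avoiding $\Sigma$ is circular. By the uniform-recurrence argument of GKT (reproduced in the paper's proof that the height function is finite), such a set already forces a periodic orbit, so producing it is the whole problem. Nothing in your proposal explains why $K_\epsilon$ should contain long pieces nearly tangent to the flow, or why their limits should be invariant and stay away from $S$.

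What is missing is a \emph{quantitative} topological invariant that is forced to blow up. In the paper, for $s$ large:
\begin{itemize}
\item $\mathcal C(s)$ is closed, its complement is $\#C_r$ collars $I\times\T^2$, and the components of the regular stratum $\mathcal C^*(s)$ (points with exactly two nearest boundary points) are homotopically trivial.
\item Hence $\mathcal C(s)$ is homeomorphic to the boundary tori with pairs of disks identified, one pair per component of $\mathcal C^*(s)$.
\item A Mayer--Vietoris induction then gives $\dim H_1(\mathcal C(s),\R)\ge\dim H_1(\partial\hat S_r(M),\R)+\#\{\text{components of }\mathcal C^*(s)\}$.
\item A flat cone surface has infinitely many saddle connections, so some pair of cone points is joined by infinitely many. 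For any $N$ and all large $s$, the unit vectors at the midpoints of the $N$ shortest ones, tangent to them, lie in pairwise distinct components of $\mathcal C^*(s)$.
\end{itemize}
Thus $\dim H_1(\mathcal C(s),\R)\to\infty$, whereas the retraction gives $\dim H_1(\mathcal C(s),\R)\le\dim H_1(\hat S_r(M),\R)\le 2g+\#C_r$. Without this counting argument, or a substitute for it, your Steps 1--2 produce no contradiction.

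Incidentally, Theorem~\ref{thm:A} follows from Theorem~\ref{thm:B} for every polygon, rational or not, by doubling the polygon along its boundary to obtain a flat cone sphere; no unfolding is needed.
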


\noindent  The periodic orbit problem for billiards in polygons was included by A.~Katok in a list of
{\it Five Most Resistant Problems in Dynamics} (Problem 3 (ii) ) \cite{Ka04}.

For acute triangles the existence of a periodic orbit follows by a variational argument
going back to G.~Fagnano \cite{Fa} in 1775, but it is only with the work of 
R.~E.~Schwartz \cite{Sc08} that a proof of existence, constructive, has been given for obtuse triangles with one angle up to 100 degrees, 
a result later improved up to 112.3 degrees by G.~Tokarsky, J. Garber, B. Marinov, and K.~Moore ~\cite{TGMM}. 
In the special case of right triangles results on density of periodic 
orbits are known \cite{Trb05}. We refer the reader to the survey \cite{Sc_ICM} for results
and open problems on billiard dynamics, in particular in polygons.  

\smallskip
When the polygon is {\it rational} (in particular if all angles are in $2\pi\Q$ and the polygon is simply connected) or
the holonomy of the flat metric is {\it rational},  then the phase spaces of the billiard flow and of the geodesic flow are foliated by invariant surfaces. 
The existence of a periodic orbit on a dense set of invariant surfaces was first proved by
H.~Masur \cite{Ma86} by a method based on Teichm\"uller theory. 
\subsection{Outline of the argument} 
The argument is by contradiction. This strategy was suggested on the one hand by an interpretation of J. Smillie's proof
in the rational case \cite{Sm99}, \S 4, as a proof by contradiction, and on the other hand  by the results of R.~E. ~Schwartz 
\cite{Sc06}, \cite{Sc_ICM}, \S 5.4, on 
so-called {\it recalcitrant} polygons which reveal the strong instability of  periodic orbits with respect to the parameters 
of the polygon.

We introduce a one-parameter deformation with uniformly bounded sectional curvatures of the solvable (pseudo-homogeneous) metric structure 
of the $3$-dimensional phase space (in fact, of the energy surface) of the geodesic flow on a flat surface with conical singularities on the complement of the cone points. 

From a result of Galperin, Kr\"uger and Troubetzkoy \cite{GKT} we derive that,  under the hypothesis
that there are no regular periodic orbits, the Riemannian manifold with boundary  given by the deformed 
Riemann metric on  unit tangent bundle of the surface (with blow-up at the cone points)  converges to the 
disjoint union of the boundary tori (in the sense that the maximum of the distance function from the boundary 
tori  converges to zero along the deformation).  

We then analyze, under the hypothesis of no regular periodic orbit,
 the skeleton, or cut-locus, of the Riemannian manifold with boundary, defined as the set of point such that their
 distance to the boundary is realized by at least two distinct boundary points. 
 We prove that the cut-locus has
 the homotopy type of a branched surface given by the union of finitely many tori (in number equal to the number of conical points) 
 glued along disks, or, equivalently, by the union of an orientable closed surface of higher genus with finitely many closed 
 topological disks glued along non-intersecting simple closed paths on the surface.
 
 The first Betti number
 of the branched surface, hence of the cut-locus,  diverges along the deformation.
 However, it follows from a result of  A.~G.~Vainshtein,  V.~A.~Efremovich,  and E.~A.~Loginov \cite{VEL78} that the skeleton 
 (which is closed 
 under our hypothesis) is a deformation retract of the manifold, hence its first homology group has a bounded 
 rank. This contradiction proves the theorem.

\section*{Acknowledgements} I am extremely grateful to Rich Schwartz and Pascal Hubert for several questions and comments on a first draft of the paper and
to Rich, David Fisher, Carlos Matheus, Bram Petri, and Anton Zorich for several conversation which helped me to improve the topological part
of the argument. I owe to Pascal Hubert the observation that the argument could perhaps prove existence of infinitely many periodic orbits.

\section{Flat surfaces}

A flat cone surface is a closed surface that is locally Euclidean at all but finitely many singular points. Near each of these singular points
the surface is locally isometric to a {\it conical plane}, that is to the metric of a planar cone with the singular point corresponding to the vertex of
the cone.

More precisely, let $(M,r)$ be a closed orientable surface (of genus $g\geq 0$ and 
Euler characteristic $\chi=2-2g$) endowed with a flat metric $r$ of finite
total area with a finite number of conical singularities \cite{Tr86}, \cite{Thu98}, 
\cite{Ve93}. 

A point $p\in M$ is called a {\it conical point} of 
parameter $\alpha>-1$ if there exists a coordinate $z:U\to\C$, defined 
on a neighborhood $U\subset M$ of $p$, such that $z(p)=0$ and the metric 
(as a a quadratic form) can be written in coordinates as 
$$
r^2(z) = |z^{\alpha}dz|^2\,\,, \quad z\in U\,.
$$
If $p\in M$ is a conical point of parameter $\alpha_p>-1$, the total angle at $p$ is equal to $\theta_p:=2\pi (\alpha_p +1)$ and the {\it concentrated curvature} at $p$ is $K_p := -2 \pi \alpha_p \delta_p$. With this definition the Gauss-Bonnet formula holds. 

A conical point $p\in M$ will be called a conical singularity if $\alpha_p\not=0$. Let $C_r\subset M$ be the subset of all conical singularities of the 
metric $r$ on $M$.
We have
\begin{equation}
\label{eq:GB} 
\frac{1}{2\pi} \sum_{p\in C_r} K_p= \frac{1}{2\pi} \int_M K  = 
\chi \,(=2-2g) \,\,.
\end{equation}
Let $S_r(M)$ be the unit tangent bundle of singular Riemannian manifold $(M,r)$.
Let $\hat M_r:= M\setminus C_r$ and let $\hat S_r(M):=S\left(\hat M_r,r\right)$ be the 
unit tangent bundle for the open Riemannian surface $\left(\hat M_r, r\right)$, that is,
$\hat S_r(M)= S_r(M) \vert \hat M_r$, the restriction of the unit tangent bundle $S_r(M)$ to the complement of the set of conical points. 
By definition 
$$
S_r(M) =  \hat S_r(M) \cup  (C_r \times \T) \,.
$$

Since $(\hat M_r,r)$ is flat, there is natural holonomy homomorphism, 
$$
H_r :\pi_1\left(\hat M_r, \ast\right) \to SO(2,\R) \,,
$$
($\ast$ denotes an arbitrary fixed regular point) given by the parallel transport along closed paths with respect to the Riemannian connection of the metric. Let $p\in C_r$ be a conical singularity 
and let $a_p\in \pi_1\left(\hat M_r,\ast\right)$ be the equivalence class represented by a closed path in $\hat M_r$ with index $1$ with respect to $p$ and index $0$ with respect to all $p'\in C_r$, $p'\not=p$. We have
$$
H_r(a_p) =  \begin{pmatrix} \cos\theta_p & -\sin\theta_p\\
\sin\theta_p &  \cos\theta_p\end{pmatrix}\,\,.
$$
Thus, if the holonomy homomorphism is trivial then all angles at conical singularities are integer multiples of $2\pi$. In this case $g\geq 2$ and
the metric $r$ is induced by a holomorphic (abelian) differential on $M$
vanishing at $C_r$. In addition, if the square of the holonomy homomorphism 
$H_r^2: \pi_1\left(\hat M_r, \ast\right) \to SO(2,\R)$ is trivial then all cone
angles are integer or half integer multiples of $2\pi$. In this case the metric $r$ is induced by a holomorphic quadratic differential on $M$. In all such cases the geodesic flow has a periodic orbit on
a dense set of invariant surfaces \cite{Ma86} and it is uniquely ergodic on almost all invariant surfaces \cite{KMS}. 

\begin{definition} The holonomy homomorphism $H_r :\pi_1\left(\hat M_r, \ast\right) \to SO(2,\R)$ is
called rational if for every $a \in \pi_1\left(\hat M_r, \ast\right)$ the angle of the rotation $H_r(a)$ is a rational multiple of $2\pi$ or, in other terms, the rotation $H_r(a)$ has finite period in $SO(2,\R)$
\end{definition}

\medskip
The unit tangent bundle $\hat S_r(M)$ is an open {\it pseudo-homogeneous} $3$-manifold
(in the sense of \cite{Fo02b}) modeled on the $3$-dimensional solvable Lie algebra $\mathfrak s$ with generators $\{X, Y, \Theta\}$ satisfying the commutation relations
\begin{equation}
\label{eq:commone}
[X,Y]=0\,, \quad [\Theta,X]= Y \,, \quad [\Theta,Y]=-X\,.
\end{equation}
In fact, there is a natural action of the group $SO(2,\R)$ which preserves the
fibers of the bundle $\hat S_r(M) \to \hat M_r$ and let $\Theta$ be the generator of this action. Let $R_{\theta}\in SO(2,\R)$ be the rotation of angle $\theta\in \R$ in the positive direction. Let $X$ be the generator of the geodesic flow 
$\{G_t\}$ and let $Y$ be the generator of the flow $\{G^{\perp}_t\}$ defined as follows:
$$ 
G^{\perp}_t := \{R^{-1}_{\frac{\pi}{2}} \circ G_t \circ 
R_{\frac{\pi}{2}}\}\,, \quad \text{ for all }t\in\R \,.
$$ 
It is a standard fact that, since $\left(\hat M_r,r\right)$ is flat, the commutation relation \pref{eq:commone} hold for the frame $\{X,Y,\Theta\}$ on $\hat S_r(M)$ \cite{ST}, 
\S 7.2, \cite{GK80} or \cite{Fl92}. Although $\hat S_r(M)$ is pseudo-homogeneous, the geodesic vector field $X$ is not a renormalizable element of the Lie algebra $\mathfrak s$ (in the sense of \cite{Fo02b}). Hence the renormalization scheme of \cite{Fo02b} cannot be applied as such to the study of the dynamics of the geodesic flow. In fact, the Lie algebra $\mathfrak s$ has no renormalizable elements in the sense of \cite{Fo02b}. We recall that the notion of a renormalizable element given there is a restricted one: it requires the
existence of a volume preserving automorphism of the Lie algebra with a one-dimensional strong unstable (or strong stable) space. For examples,  all elements of Abelian Lie
algebras are renormalizable, unipotent elements of the Lie algebra $\mathfrak sl(2, \R)$ are (they are often called {\it horospherical}), as well as non-central element of the 
Heisenberg Lie algebra. 

\section{ The height function}

\begin{definition}
\label{def:openstrip}
An \emph{open strip} of width $\epsilon >0$ and height $h>0$ based at $v\in S_r(M)$ is an embedding $E_v : (-\epsilon/2, \epsilon/2) \times  (0, h) \to  \hat S_r(M)$ defined as follows:
\begin{equation}
\label{eq:embed}
E_v (t,s)  = G_t G^\perp_s  v   \,, \quad (s,t) \in (-\epsilon/2, \epsilon/2) 
\times  (0, h) \,.
\end{equation}
\end{definition}
By definition if $E_v$ is an open strip the projection of its range on $\hat M_r$ is an open
flat rectangle which does not contain cone points in its interior.
\begin{definition}
\label{def:height}
The \emph{height function} $h: \R^+ \times S_r(M) \to \R^+ \cup \{+\infty\}$ is  defined as follows. 
For any $(\epsilon, v) \in  \R^+  \times S_r(M)$, the height $h(\epsilon, v)$ is  the maximal height
of an open strip of width $\epsilon>0$ based at $v\in S_r(M)$.
\end{definition}

\begin{lemma} 
\label{lemma:heightfunction}
The height function is everywhere finite and upper semicontinuous.
\end{lemma}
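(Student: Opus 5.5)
Finiteness will come from an area bound, and upper semicontinuity from the fact that the set where a strip is embedded and misses the cone points is open.

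\emph{Finiteness.} An open strip $E_v$ of width $\epsilon$ and height $h$ is an embedding. Its projection to $\hat M_r$ is a flat open rectangle of area $\epsilon h$. I would argue that this projection is injective, so that $\epsilon h \le \mathrm{Area}(M,r)<\infty$ and hence $h(\epsilon,v)\le \mathrm{Area}(M,r)/\epsilon$. For injectivity, suppose two distinct points $(t,s)\neq(t',s')$ of the rectangle project to the same point of $M$. The unit vectors $G_tG^\perp_s v$ and $G_{t'}G^\perp_{s'}v$ are obtained from one another by parallel transport along a path in the flat rectangle. Since the rectangle is simply connected and contains no cone points, they are parallel in the developed chart, which is exactly the statement that $G_t$, $G^\perp_s$ act by translations. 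So if the base points coincide, the two unit vectors coincide, contradicting that $E_v$ is an embedding into $\hat S_r(M)$.

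\emph{Upper semicontinuity.} I must show that for each $(\epsilon_0,v_0)$ and each $H>h(\epsilon_0,v_0)$ there is a neighborhood on which $h<H$. Equivalently, the set $\{h\ge H\}$ is closed. Suppose $(\epsilon_n,v_n)\to(\epsilon_0,v_0)$ and that each $v_n$ carries an open strip of width $\epsilon_n$ and height $\ge H$. I will show that $v_0$ carries a strip of width $\epsilon_0$ and height $H'$ for every $H'<H$. Then $h(\epsilon_0,v_0)\ge H$, which is a contradiction.

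\begin{itemize}
\item \emph{No cone points.} Fix a compact subrectangle $Q=[-\epsilon_0/2+\delta,\epsilon_0/2-\delta]\times[\delta,H-\delta]$. For large $n$, $Q$ lies inside the domain of the $n$-th strip. The maps $(t,s)\mapsto G_tG^\perp_s v_n$ are defined on $Q$ and converge uniformly to $(t,s)\mapsto G_tG^\perp_s v_0$, by continuity of the flows on the region avoiding cone points. If the limit rectangle met a cone point $p$ in its interior, then nearby developed rectangles would also contain $p$ in their interior, which is impossible. Cone points could only lie on the boundary of the limit rectangle, and that is allowed for an open strip.
\item \emph{Limit flows.} Here lies a subtlety: if $v_0$ is a cone point, or the flows reach cone points, the limit flows may not be defined. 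I would handle this by noting that the interior of the limit rectangle contains no cone points, so the developing map extends to it. If $v_0\in C_r\times\T$, then the strip is based at a point on the boundary, and the definition tolerates this.
\item \emph{Injectivity.} This also passes to the limit. A self-overlap of the open limit rectangle is an open condition: it would produce two interior points identified by a translation, and since a nearby isometry persists, the $n$-th rectangles would overlap for large $n$.
\end{itemize}

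The main obstacle is this last limiting step. Embeddedness is an open, not a closed, condition in general. What rescues it here is flatness: an overlap of two open flat rectangles is an open set and persists under small perturbation. I would make this precise by using the developing map, which shows that a non-injective limit forces non-injective approximants. Letting $\delta\to0$ then completes the argument.
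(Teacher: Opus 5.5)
\textbf{Gap: the finiteness part.} Your upper semicontinuity argument is sound, and more detailed than the paper's one-line ``continuity argument''. A coincidence $E_{v_0}(s,t)=E_{v_0}(s',t')$ forces the corresponding loop in $\hat M_r$ to have trivial holonomy. Nearby loops are homotopic in $\hat M_r$, so they have the same holonomy, and the overlap therefore persists for the approximating strips.

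The problem is your claim that the projection of an embedded open strip to $\hat M_r$ is injective. This fails in general once the holonomy $H_r$ is non-trivial. The rectangle is simply connected, but a path in it from $(s,t)$ to $(s',t')$ projects to a \emph{closed} loop in $\hat M_r$, which need not be null-homotopic there. The two unit vectors over the common base point differ exactly by the holonomy of that loop.

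For a concrete example, take a cone point $p$ of angle $\pi/2$, so that near $p$ the surface is isometric to $\C$ modulo $z\mapsto iz$. For $d$ small, consider the strip of width $\epsilon<2d$ along the line $\{\mathrm{Im}\,z=d\}$ with $|\mathrm{Re}\,z|<2d$. It avoids $p$, and it is embedded in the unit tangent bundle, since two of its sheets over the same base point have directions differing by $\pi/2$. Yet its projection identifies the parts of the strip near $d+di$ and near $-d+di$.

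So the bound $h(\epsilon,v)\le \mathrm{Area}(M,r)/\epsilon$ is only justified for translation surfaces. If the holonomy group is finite of order $N$, passing to the translation cover gives it with an extra factor $N$. For irrational holonomy, which is the case the theorem is really about, there is no such cover. Embeddedness of a $2$-dimensional strip in the $3$-manifold $\hat S_r(M)$ then gives no area bound: over a point of $M$ the strip can have many sheets whose directions differ by elements of a dense holonomy group.

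\textbf{What is missing.} You need a genuinely dynamical input, namely the theorem of Galperin, Kr\"uger and Troubetzkoy. It says the forward orbit of any $v$ is either periodic, and then embeddedness bounds the height by the period, or its projection has a cone point in its closure, and then any strip of width $\epsilon$ along it eventually contains a cone point.

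The paper obtains this by contradiction, using exactly the semicontinuity you proved:
\begin{enumerate}
\item The set $K_\epsilon=\{v: h(\epsilon,v)=+\infty\}$ is closed and forward invariant, and the flow is continuous on it, so it contains a uniformly recurrent point $v$.
\item Let $\epsilon_v$ be the maximal width of an infinite strip at $v$. Recurrence without periodicity produces points on both boundary edges of that strip with arbitrarily large $h(\epsilon_v/2,\cdot)$.
\item Uniform recurrence together with semicontinuity bounds $h(\epsilon_v/2,\cdot)$ by $C+2h^\pm_v$ along at least one of the two edges, which is a contradiction.
\end{enumerate}
You should therefore prove semicontinuity first, and replace the area bound by this recurrence argument or by citing that theorem.
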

\begin{proof} It was proved in \cite{GKT} (Corollary 2 and Theorem 4) that 
the (forward) orbit of any $v \in S_r(M)$ is either periodic or else its projection onto
the surface $M$ contains cone points in its closure. This result implies that the
height function is everywhere finite. We recall their argument below
for the convenience of the reader.

\smallskip
By a continuity argument,  for every $h\in \R^+$, the set 
\begin{equation}
\label{eq:semicont}
\{ (\epsilon,v) \in \R^+ \times S_r(M) \vert  h(\epsilon, v) < h \} \,\, \text{ \rm is open in }  \,\,
 \R^+\times S_r(M)\,,
\end{equation}
hence the height function is upper semicontinuous. We will proved below by contradiction
that the height function is everywhere finite.

By the semicontinuity property,  for every $(\epsilon, h)\in \R^+\times \R^+$, the set 
$$
\{v \in S_r(M) \vert  h(\epsilon, v) <h\} \,\, \text{ \rm  is open in } S_r(M)\,.
$$
It follows that the set $K_\epsilon :=\{v \in S_r(M) \vert  h(\epsilon, v) =+ \infty\}$ is closed 
since it is a countable intersection of closed sets.  Let us assume that there exists 
$\epsilon \in \R^+$ such that $K_\epsilon \not = \emptyset$. The set $K_\epsilon$ is compact, 
invariant under the geodesic flow and the the restriction of the forward geodesic flow to 
$K_\epsilon$ is continuous, since the projection on to $M$ of the forward trajectory of any vector in $K_\epsilon$ stays uniformly away from the cone points.  
Since in a minimal system any point is uniformly recurrent and since any compact topological system has a minimal subsystem, any continuous transformation or semi-flow $\{\Phi_t\}$ of a compact space $X$
has a \emph{uniformly recurrent }point in the following sense (see for instance H.~Furstenberg
\cite{F}). A point $x\in X$ is
uniformly recurrent if for any neighborhood $\Cal U_x \subset X$ there exists a constant
$C>0$ and a diverging sequence $\{t_n\} \subset \R^+$ such that
\begin{equation}
 0\leq t_{n+1} -t_n \leq C \quad \text{ \rm and } \quad  \Phi_{t_n} (x) \in \Cal U_x \,, 
\quad \text{ \rm for all } n\in \N\,.
\end{equation}
It follows that there exists a  vector $v\in K_\epsilon$ uniformly recurrent under the forward
geodesic flow. Let $\epsilon_v>0$ be defined as the maximum $\epsilon >0$ such that 
$h(\epsilon, v) =+\infty$ (which exists since the set  $\{ \epsilon \in \R^+ \vert  h(\epsilon, v) 
=+\infty\}$ is closed) and let
$$
\Sigma_v :=  E_v \left(  (-\epsilon_v/2, \epsilon_v/2)  \times \R^+ \right) \subset  S_r(M)\,.
$$
Let $\Cal E^l_v$ and $\Cal E^r_v\subset M$ be the unbounded connected 
components (left and right) of  the boundary $\partial \Sigma_v\subset S_r(M)$. 
Let $S_v\subset \hat M_r$ be the projection of the strip $\Sigma_v$ onto the (open)
flat surface $\hat M_r$. Since $v\in S_r(M)$ is recurrent but non-periodic, the set  
$\partial S_v \cap S_v$ contains the boundaries of parallelograms with two arbitrarily 
long parallel edges. It follows that for every $L>0$ there exist $w^l_L \in \Cal E^l_v$ 
and $w^r_L \in \Cal E^r_v$ such that 
\begin{equation}
\label{eq:longedges}
h(\frac{\epsilon_v}{2}, w^l_L)\geq L \quad \text{\rm and  } \quad  
h(\frac{\epsilon_v}{2}, w^r_L)\geq L\,.
\end{equation}
However, since $v\in S_r(M)$ is uniformly recurrent, the above property cannot hold.
In fact, by definition $h(\epsilon,v) <+\infty$ for any $\epsilon >\epsilon_v$, hence for
$\epsilon = 3\epsilon_v/2$ it follows that 
$$
h^+_v := h(\frac{\epsilon_v}{2}, G^\perp_{\frac{\epsilon_v}{2}}(v)) <+\infty 
\quad \text{ \rm or }  \quad 
h^-_v := h(\frac{\epsilon_v}{2}, G^\perp_{-\frac{\epsilon_v}{2}}(v)) <+\infty \,.
$$
By continuity,  there exists a neighborhood $\Cal U_v \subset S_r(M)$ such that 
for all $w\in \Cal U_v$, 
$$
h(\frac{\epsilon_v}{2}, G^\perp_{\frac{\epsilon_v}{2}}(w)) < 2h^+_v   \quad \text{ \rm or }
 \quad h(\frac{\epsilon_v}{2}, G^\perp_{-\frac{\epsilon_v}{2}}(w)) <2 h^-_v  \,.
$$
By the uniform recurrence property, there exist $C>0$ and a diverging sequence 
$\{t_n\}$ such that $t_{n+1}-t_n \leq C$ and $ v_n:=G_{t_n} (v) \in \Cal U_v$, hence 
$$
h(\frac{\epsilon_v}{2}, G^\perp_{\frac{\epsilon_v}{2}} G_{t_n}(v)) < 2 h^+_v \,,
\quad \text{ \rm for all } n\in \N \,, 
$$
or  
$$
 \quad h(\frac{\epsilon_v}{2}, G^\perp_{-\frac{\epsilon_v}{2}}G_{t_n}(v)) < 2 h^-_v  \,,
 \quad \text{ \rm for all } n\in \N \,.
$$
Since $t_{n+1}-t_n \leq C$, it follows that 
$$
h(\frac{\epsilon_v}{2}, w) \leq   C + 2 h^+_v\,,      \quad \text{ \rm for all } w \in \Cal E^l_v\,,
$$
or 
$$
h(\frac{\epsilon_v}{2}, w) \leq    C+ 2h^-_v \,,     \quad \text{ \rm for all } w \in \Cal E^r_v\,,
$$
in contradiction with the conclusion \pref{eq:longedges}. Such a contradiction 
completes the argument.

\end{proof}

\begin{corollary} \label{cor:maxheigth}
For any $\epsilon >0$, the following holds:
$$
H(\epsilon) := \sup_{v\in S_r(M)} h(\epsilon, v) =\max  _{v\in S_r(M)} h(\epsilon, v) < +\infty\,.
$$
\end{corollary}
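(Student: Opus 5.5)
The plan is to deduce Corollary~\ref{cor:maxheigth} directly from Lemma~\ref{lemma:heightfunction}, using the classical fact that an upper semicontinuous function on a compact space with values in $\R^+\cup\{+\infty\}$ that is everywhere finite attains its supremum, which is therefore finite. Fix $\epsilon>0$. First I will check that $S_r(M)=\hat S_r(M)\cup (C_r\times\T)$ is compact. The intended topology is that of the unit tangent bundle with the cone points blown up, so that the fibers over cone points are circles. Hence $S_r(M)$ is a compact $3$-manifold with boundary tori, and in particular it is compact.

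Next, by the semicontinuity property \pref{eq:semicont} restricted to the slice $\{\epsilon\}\times S_r(M)$, the function $v\mapsto h(\epsilon,v)$ is upper semicontinuous on $S_r(M)$. By Lemma~\ref{lemma:heightfunction} it takes only finite values. For each $n\in\N$ the set $U_n:=\{v\in S_r(M)\mid h(\epsilon,v)<n\}$ is open, and the $U_n$ cover $S_r(M)$ because $h(\epsilon,\cdot)$ is finite. The sets are increasing, so compactness gives a single $N$ with $U_N=S_r(M)$. Therefore $H(\epsilon)\le N<+\infty$.

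To see that the supremum is a maximum, I will take a sequence $v_k$ with $h(\epsilon,v_k)\to H(\epsilon)$ and pass to a subsequence converging to some $v_\infty\in S_r(M)$. Suppose $h(\epsilon,v_\infty)<H(\epsilon)$, and pick $c$ with $h(\epsilon,v_\infty)<c<H(\epsilon)$. The set $\{h(\epsilon,\cdot)<c\}$ is an open neighborhood of $v_\infty$, so it contains $v_k$ for all large $k$. That contradicts $h(\epsilon,v_k)\to H(\epsilon)>c$. Hence $h(\epsilon,v_\infty)=H(\epsilon)$.

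The only delicate point is the compactness of $S_r(M)$, together with the fact that the semicontinuity in Lemma~\ref{lemma:heightfunction} holds on all of $S_r(M)$, including the boundary circles over cone points, in the topology for which $S_r(M)$ is compact. I expect to handle this by appealing to the blow-up description: near a cone point of angle $\theta_p$, polar coordinates $(\rho,\phi)$ with $\phi\in\R/\theta_p\Z$ and the direction angle give a collar $[0,\delta)\times \T^2$. Lemma~\ref{lemma:heightfunction} is already stated on $S_r(M)$ in this topology, so no further work should be needed beyond noting this.
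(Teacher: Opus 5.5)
Your proposal is correct and is essentially the paper's argument: both combine upper semicontinuity and everywhere finiteness from Lemma~\ref{lemma:heightfunction} with compactness of $S_r(M)$. The only difference is cosmetic: the paper gets attainment and finiteness at once from the nested nonempty compact superlevel sets $\{h(\epsilon,\cdot)\ge h_n\}$ with $h_n \uparrow H(\epsilon)$, whereas you first bound $H(\epsilon)$ through the increasing open cover $\{h(\epsilon,\cdot)<n\}$ and then extract a maximizer by sequential compactness (your explicit appeal to compactness in the blown-up topology is an assumption the paper uses tacitly).
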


\begin{proof}
 It follows from the upper semicontinuity property of the height function.  Let $(h_n)$ be a
 non-decreasing sequence of real numbers converging to $H(\epsilon)\geq 0$. 
 By semicontinuity  for every  $n\in \N$, the set
 $$
 K_n := \{ v\in  S_r(M) \vert h(\epsilon, v ) \geq h_n\}
 $$
 is closed in $S_r(M)$, hence compact. By construction $K_n\not =\emptyset$
 and  $K_{n+1} \subset K_n$ for all $n\in \N$. Since the sets $K_n$ are
 compact, there exists $v\in S_r(M)$ such that $v\in K_n$ for all $n\in \N$, 
 hence $h(\epsilon, v)=H(\epsilon)$. Thus  $H(\epsilon)$ is equal to the maximum
  of the height function $h(\epsilon, \cdot)$ on $S_r(M)$. Since  the height function 
 is everywhere finite by Lemma \ref{lemma:heightfunction},   it follows in particular 
 that $H(\epsilon)$ is finite. 
 \end{proof}
 
\section{A bounded curvature deformation} We consider a constant volume deformation $\{X_s, Y_s, \Theta_s\}$
defined by the following rescaling  of the frame $\{X, Y, \Theta\}$: for all $s>0$, let
\begin{equation}
\label{eq:deformation}
X_s := e^s  X  \,, \quad Y_s := Y \,, \quad \Theta_s := e^{-s} \Theta \,.
\end{equation}
Let $R_s$ be the Riemannian metric on the open $3$-manifold $\hat S_r(M)$ defined by the condition 
that the frame $\{X_s, Y_s, \Theta_s\}$ is orthonormal, that is, the unique metric such that
\begin{equation}
\label{eq:Rtmetric}
\begin{aligned}
&R_s(X_s,X_s)=R_s(Y_s,Y_s)=R_s(\Theta_s,\Theta_s)=1 \,, \\
&R_s(X_s,Y_s)=R_s(X_s,\Theta_s)=R_s(Y_s,\Theta_s)=0\,.
\end{aligned}
\end{equation}
A straightforward calculation yields the following result.

\begin{lemma}  
\label{lemma:bc}
For each $s>0$, the metric $R_s$ is pseudo-homogeneous modeled on the
solvable Lie algebra $\mathfrak s_s$ with generators $\{X_s, Y_s, \Theta_s\}$ satisfying the commutation relations
$$
[X_s, Y_s]=0\,, \quad [\Theta_s,X_s]= Y_s \,, \quad [\Theta_s,Y_s]=- e^{-2s} X_s\,.
$$
The  structural constants of the basis $\{X_s, Y_s, \Theta_s\}$ converge to the structural constant of the Heisenberg Lie algebra. In particular, all sectional curvatures of the metric $R_s$ are uniformly 
bounded, with all derivatives, on $\hat S_r(M)$ for all $s>0$.
\end{lemma}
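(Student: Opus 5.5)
The proof has three steps: compute the brackets, observe that they converge to the Heisenberg ones, and read off the curvature bounds from the Koszul formula.

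\textbf{Brackets.} Since $X_s=e^sX$, $Y_s=Y$, $\Theta_s=e^{-s}\Theta$ are constant rescalings of the frame $\{X,Y,\Theta\}$, the brackets follow from \pref{eq:commone} by bilinearity. We get $[X_s,Y_s]=e^s[X,Y]=0$ and $[\Theta_s,X_s]=[\Theta,X]=Y=Y_s$. Finally,
$$[\Theta_s,Y_s]=e^{-s}[\Theta,Y]=-e^{-s}X=-e^{-2s}X_s.$$
Hence the Jacobi identity holds and $\{X_s,Y_s,\Theta_s\}$ spans a Lie algebra $\mathfrak s_s\cong\mathfrak s$ for $s<\infty$. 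Pseudo-homogeneity of $R_s$ is then the statement that $\hat S_r(M)$ is locally modeled on $\mathfrak s_s$ with an orthonormal frame of commuting-relation type. This holds because the frame is globally defined on $\hat S_r(M)$ and $R_s$ is defined by \pref{eq:Rtmetric}.

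\textbf{Limit.} As $s\to+\infty$ the only nonconstant structure constant is $e^{-2s}$, and it tends to $0$. The limiting relations are $[X,Y]=0$, $[\Theta,X]=Y$, $[\Theta,Y]=0$, which are those of the Heisenberg algebra with center spanned by $Y$. The structure constants $c^k_{ij}(s)$ all lie in $\{0,\pm1,\pm e^{-2s}\}$, so they are uniformly bounded by $1$ for $s>0$.

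\textbf{Curvature.} For a left-invariant-type metric with orthonormal frame $\{E_i\}$ and constant structure constants $c_{ij}^k$, the Koszul formula gives the connection coefficients:
$$
\Gamma_{ij}^k=\tfrac12\bigl(c_{ij}^k - c_{jk}^i + c_{ki}^j\bigr).
$$
These are constants and polynomial (linear) in the $c$'s. The curvature components $R(E_i,E_j)E_k$ are then polynomial of degree two in the $\Gamma$'s, with no derivative terms, since the $\Gamma$'s are constant along the frame. So all sectional curvatures are quadratic polynomials in $c(s)$ and are uniformly bounded for $s>0$, converging to those of the Heisenberg (Nil) metric.

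Covariant derivatives $\nabla^m R$ in the frame are likewise polynomials in the $\Gamma$'s. Frame derivatives of constants vanish, and each $\nabla$ only contracts with further $\Gamma$'s. This gives the uniform bounds on all derivatives.

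The only point needing care is that these are local computations on the open manifold $\hat S_r(M)$. They are valid at every point because the frame is globally defined and smooth away from the fibers over cone points. Boundedness is therefore uniform on all of $\hat S_r(M)$, with no control needed near the blow-up boundary. There is no real obstacle; the computation is routine.
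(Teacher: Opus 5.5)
Your proposal is correct and follows essentially the same route as the paper: compute the rescaled brackets, observe that the structure constants stay in $\{0,\pm1,\pm e^{-2s}\}$ and tend to the Heisenberg ones, and conclude that the connection, curvature and all covariant derivatives (linear, quadratic, and polynomial in these constants) are uniformly bounded. The only difference is that the paper cites Milnor for the last step, while you derive it explicitly from the Koszul formula.
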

\begin{proof} By the commutation relation the structure constant of the basis $\{X_s, Y_s, \Theta_s\}$  are given by matrices
$$
C^1 = \begin{pmatrix} 0 & 0 & 0 \\ 0 & 0 & e^{-2s} \\ 0 & -e^{-2s} & 0 \end{pmatrix} \,, \quad C^2 = \begin{pmatrix} 0 & 0 & -1 \\ 0 & 0 & 0 \\ -1 & 0 & 0 \end{pmatrix} \,,
 \quad C^3 = \begin{pmatrix} 0 & 0 & 0 \\ 0 & 0 & 0 \\ 0 & 0 & 0 \end{pmatrix}\,,
$$
 hence they are uniformly bounded for all $s>0$.  Since the Levi-Civita connection, the Riemann tensor and the sectional
 curvature are given by linear or quadratic expressions in the structure constants \cite{Mil}, they are uniformly bounded as well
 with uniformly bounded derivatives of all orders.
\end{proof} 

 For all $s>0$, let $d_s (v,\partial \hat S_r(M))$ denote the distance with respect to the deformed 
 Riemannian metric $R_s$ on $\hat S_r(M)$ of a vector $v\in \hat S_r(M)$ from the boundary
 $\partial \hat S_r(M) \equiv  C_r \times \T$ of the restriction of the unit tangent bundle to
 the open surface $\hat M_r$. Let  $d_s (\hat S_r(M), \partial \hat S_r(M))$ be the $R_s$-diameter of the
 $3$-manifolds relative to $\partial \hat S_r(M)$:
 $$
 d_s (\hat S_r(M), \partial \hat S_r(M)) = \sup_{v\in S_r(M)} d_s (v,\partial \hat S_r(M))\,.
 $$

\begin{lemma} 
\label{lemma:zerodiam}
If the geodesic flow $\{ G_t\}$ has no periodic orbits, the limit of the
relative diameter along the deformation $\{R_s\}$ is zero, that is, 
\begin{equation}
\label{eq:zerodiam}
\lim_{t\to +\infty} d_s (\hat S_r(M), \partial \hat S_r(M))  = 0 \,.
\end{equation}
\end{lemma}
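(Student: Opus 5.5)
The plan is to bound $d_s(v,\partial\hat S_r(M))$ uniformly in $v$ by the $R_s$-length of an explicit path from $v$ to the boundary, built from the three frame flows, and to use Corollary~\ref{cor:maxheigth} to control how long the path has to be. Under the hypothesis of no periodic orbits, Lemma~\ref{lemma:heightfunction} and Corollary~\ref{cor:maxheigth} give $H(\epsilon)<+\infty$ for every $\epsilon>0$. Since the frame $\{X_s,Y_s,\Theta_s\}$ is $R_s$-orthonormal, the lengths of the basic moves are as follows. Moving along an orbit of $\{G_t\}$ for time $t$ costs $R_s$-length $e^{-s}|t|$, because $X=e^{-s}X_s$. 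Moving along $\{G^\perp_u\}$ for time $u$ costs $|u|$, because $Y_s=Y$. Rotating in the fiber by angle $\theta$ costs $e^{s}|\theta|$, which is large. So the path must avoid rotations and use only $X$ and $Y$ motions: an $X$-segment of length at most $H(\epsilon)$ together with a $Y$-segment of length at most $\epsilon$.

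Fix $\epsilon>0$ and $v\in\hat S_r(M)$. By definition of the height, the maximal open strip $E_v$ of width $\epsilon$ has height $h(\epsilon,v)\le H(\epsilon)$. Maximality means that the closure of the rectangle $E_v\big((-\epsilon/2,\epsilon/2)\times(0,h(\epsilon,v)+\delta)\big)$ meets a cone point for every $\delta>0$. Hence, taking a limit, there are $|u|\le\epsilon/2$ and $0\le t\le H(\epsilon)$ such that the flat segment $u\mapsto G^\perp_u v$ followed by $t\mapsto G_tG^\perp_u v$ stays in $\hat S_r(M)$ until it hits the blown-up boundary $C_r\times\T$. (One could equally first flow along $X$ and then along $Y$; the order does not matter because $[X,Y]=0$, so the strip is a genuine flat rectangle.) This path reaches $\partial\hat S_r(M)$, and its $R_s$-length is at most
$$
\frac{\epsilon}{2}+e^{-s}H(\epsilon)\,.
$$
Therefore
$$
d_s(\hat S_r(M),\partial\hat S_r(M))\le \frac{\epsilon}{2}+e^{-s}H(\epsilon)\,,
$$
which gives $\limsup_{s\to+\infty}d_s\le\epsilon/2$. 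Since $\epsilon>0$ is arbitrary, the limit is zero.

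The main obstacle I expect is making sense of the endpoint: the trajectory hits a cone point, and we must interpret it as landing on the blown-up boundary torus $C_r\times\T$. Near the cone point the direction is well defined along the incoming geodesic, so the curve extends continuously to a boundary point. The completion used to define $d_s$ must be the one in which $\partial\hat S_r(M)$ is this blown-up torus, and a geodesic entering a cone point should converge there in $R_s$. Rather than entering the vertex exactly, I would stop the path at distance $\delta$ from it and then finish with a short radial segment, letting $\delta\to0$. A second point to check is that the supremum is taken over all $v\in S_r(M)$, including boundary vectors, for which the distance is trivially zero.
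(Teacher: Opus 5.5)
Your proposal is correct and follows essentially the paper's argument. You take a path inside the maximal strip of width $\epsilon$ from $v$ to the cone point on its boundary, and bound its $R_s$-length by an $\epsilon/2$ term plus an $e^{-s}H(\epsilon)$ term using Corollary~\ref{cor:maxheigth}.

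The only difference is in the choice of path. The paper uses the straight diagonal in the strip, which gives $\sqrt{\epsilon^2/4+e^{-2s}H(\epsilon)^2}$. You use an L-shaped path, which may run along an edge of the strip; this is harmless, since meeting a cone point earlier only shortens the path. Your path gives the cruder but equally sufficient bound $\epsilon/2+e^{-s}H(\epsilon)$.
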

\begin{proof} For any $\epsilon>0$, let $\Sigma_v(\epsilon)$ be the  open strip of width 
$\epsilon>0$ of maximal height based at $v\in \hat S_r(M)$, that is,
$$
\Sigma_v(\epsilon):=   E_v \left(  (-\epsilon_v/2, \epsilon_v/2)  \times (0, h(\epsilon,v)\right) 
\subset \hat S_r(M)\,.
$$
Let $S_v(\epsilon) \subset \hat M_r$ the projection of $\Sigma_v(\epsilon)$ onto the open surface 
$\hat M_r$. Since $\Sigma_v(\epsilon)$ has maximal height and the geodesic flow has no periodic orbits, there is a cone point $p\in C_r$ in the boundary $\partial S_v(\epsilon) \subset M$. 
By definition of the embedding $E_v$ in formula  \pref{eq:embed} and of the deformation 
$\{R_s\}$ in formula \pref{eq:deformation}, parallel transport in $S_r(M) \vert S_v(\epsilon)$ 
yields a path joining $v\in \Sigma_v$ to $\{p\} \times \T \subset \partial \hat S_r(M)$ of 
$R_s$-length  at most  $\sqrt{  \epsilon^2/4 + e^{-2s} h(\epsilon, v)^2}$. By Corollary \ref{cor:maxheigth}, there exists $H(\epsilon)>0$ such that $h(\epsilon, v) \leq H(\epsilon)$ for all $v\in \hat S_r(M)$. It follows 
that for all $\epsilon>0$ and all $s>0$,
$$
d_s \big (\hat S_r(M), \partial \hat S_r(M)\big) \leq \sqrt{ \epsilon^2/4 + e^{-2s} H(\epsilon)}\,.
$$
which implies the desired conclusion \pref{eq:zerodiam}.
\end{proof}

After a blow up of the flat metric on the surface $M$ near the cone point, the boundary $\partial \hat M_r$ is a union of finitely many circles, hence the boundary $\partial \hat S_r(M)$
of  its unit tangent bundle $\hat S_r(M)$ is the union of finitely circles in number equal to the cardinality $\# C_r$ of the set of conical points.  In fact, near a conical point
$p$ of total angle $2\pi (\alpha+1)$, the metric has the form $r = \vert z^\alpha dz \vert$ with respect to a (canonical) holomorphic local coordinate $z$ such that
$z(p)=0$. In polar coordinates with geodesic radial coordinate $\rho = \vert z \vert^{\alpha+1} / (\alpha+1)$  the metric has the form
$$
r^2 = d \rho^2 + [ 2\pi (\alpha+1)]^2 d\phi^2 \,.
$$
Thus after blow-up of the singularity at the vertex the boundary of the open cone is the circle $\rho=0$ of total length $2\pi (\alpha+1)$ and therefore the boundary of the unit
tangent bundle of the open cone is a torus. In coordinates $(\phi, \theta) \in \R^2$, the boundary torus is the flat torus given by the lattice generated by the vectors
$2\pi (1, \alpha+1)$ and $2\pi (0,1)$. In other terms, for all $(\phi, \theta) \in \R^2$ we have 
$$
(\phi, \theta) \equiv (\phi +2 \pi, \theta + 2\pi (\alpha+1)) \quad \text{ and } \quad  (\phi, \theta) \equiv (\phi, \theta + 2\pi) \,.
$$

\begin{lemma}  \label{lemma:convex} For every $s\geq 0$, the boundary $\partial \hat S_r(M)$ with inward normal is locally geodetically concave with respect to the metric $R_s$, in the sense that its second fundamental form 
is negative semi-definite.
\end{lemma}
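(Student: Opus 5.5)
We work in explicit local coordinates near a boundary torus and compute the second fundamental form directly from the Koszul formula for the orthonormal frame $\{X_s,Y_s,\Theta_s\}$. Near a cone point $p$ we use blown-up polar coordinates $(\rho,\phi,\theta)$ on $\hat S_r(M)$. Here $\rho\ge 0$ is the geodesic distance to $p$, $\phi$ is the angular coordinate of the cone of total angle $\theta_p=2\pi(\alpha+1)$, and $\theta$ is the angle of the unit vector measured with respect to the flat parallel frame. The boundary torus is $\{\rho=0\}$ and the inward normal is $\partial_\rho$. In these coordinates the level sets $\{\rho=\rho_0\}$ are tori. Since $\rho$ is the distance from $p$, the vector field $\partial_\rho$ is the horizontal lift of the unit radial field. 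Writing $\psi:=\theta-\phi$ (up to the normalizing constant) for the angle between the vector and the radial direction, we get
$$
\partial_\rho=\cos\psi\,X+\sin\psi\,Y \quad(\text{in suitable orientation}),
$$
and $\partial_\rho$ has constant $R_s$-length only after normalizing. The first step is to express $X,Y$ in terms of $\partial_\rho,\frac1\rho\partial_\phi,\partial_\theta$ via $X=\cos\psi\,\partial_\rho+\frac{\sin\psi}{\rho}(\partial_\phi+\partial_\theta\text{-correction})$, which is the standard formula for the geodesic spray in polar coordinates on a flat cone. Then we write down the $R_s$-metric coefficients in the coordinates $(\rho,\phi,\theta)$.

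The second step is to compute the second fundamental form of $\{\rho=\rho_0\}$ with respect to the $R_s$-unit normal $N_s=\nabla^{R_s}\rho/|\nabla^{R_s}\rho|$. Concretely, we compute
$$
\mathrm{II}(V,V)=-\tfrac12(\mathcal L_{N_s}R_s)(V,V)
$$
for $V$ tangent to the level torus, i.e. $V$ in the span of $\partial_\phi,\partial_\theta$. Since $\partial_\theta=\Theta$ and $\Theta$ commutes with the radial structure, the coefficient $R_s(\partial_\theta,\partial_\theta)=e^{2s}$ is independent of $\rho$. The $\partial_\phi$ direction involves the factor $\rho$ coming from the cone metric, which is $\rho^2 d\phi^2$ in geodesic polar form, and the $\rho$-dependence of the $\phi$-circles is what produces the curvature. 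The sign to verify is that the quadratic form $\frac{d}{d\rho}R_s(V,V)$, for $V$ tangent, is $\ge 0$, with the mixed $\phi\theta$ terms not spoiling semidefiniteness. This says the tori grow as we move inward, so the boundary is concave with the inward normal convention.

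The main obstacle is that $\{X_s,Y_s\}$ rotates with $\theta$ relative to the coordinate fields, so $R_s$ in $(\rho,\phi,\theta)$ coordinates has $\psi$-dependent, non-diagonal entries of size $e^{\pm 2s}$. Semidefiniteness must then be checked for the full $2\times2$ matrix, uniformly in $\psi$ and $s$. I would first try to avoid coordinates and use the frame directly. The normal is a combination $aX+bY$ with $a,b$ functions of $\psi$, and by Koszul the form $\mathrm{II}$ on $\mathrm{span}\{\Theta_s,\,-bX+aY\}$ is determined by the commutators of Lemma~\ref{lemma:bc} together with derivatives of $a,b$ along the tangent fields. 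The term $[\Theta_s,Y_s]=-e^{-2s}X_s$ contributes with a sign controlled by $e^{-2s}$. The remaining contribution is the flat cone term $\frac1\rho\ge0$, which goes in the concave direction. Finally, we take the limit $\rho\to0$, where this term dominates, and conclude that $\mathrm{II}\le 0$ on $\partial\hat S_r(M)$.
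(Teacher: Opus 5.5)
\textbf{Gap.} You follow the same route as the paper: an explicit frame computation on the unit tangent bundle of a cone, in the coordinates $(\rho,\phi,\theta)$. You are right that the mixed $\phi\theta$ terms are the crux, but the step that disposes of them fails.

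The fibres are $R_s$-geodesics ($\nabla_{\Theta_s}\Theta_s=0$) along which $\rho$ is constant, so $\operatorname{Hess}_{R_s}\rho(\Theta_s,\Theta_s)\equiv 0$. Set $\xi=\theta-(\alpha+1)\phi$ and $V:=\frac{1}{(\alpha+1)\rho}\partial_\phi=e^{-s}\sin\xi\,X_s+\cos\xi\,Y_s$. On the span of $V$ and $\Theta_s$ the form has the shape $\bigl(\begin{smallmatrix}A&\beta\\ \beta&0\end{smallmatrix}\bigr)$. Such a matrix is semidefinite only if $\beta=0$, however large $A$ is, so ``the $1/\rho$ term dominates'' gives nothing.

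And $\beta\neq 0$. Koszul's formula with the brackets of Lemma~\ref{lemma:bc} gives $\nabla_{X_s}Y_s=\nabla_{Y_s}X_s=\tfrac{1-e^{-2s}}{2}\Theta_s$, $\nabla_{\Theta_s}X_s=\tfrac{1+e^{-2s}}{2}Y_s$ and $\nabla_{\Theta_s}Y_s=-\tfrac{1+e^{-2s}}{2}X_s$. With $\nabla^{R_s}\rho=e^{s}\cos\xi\,X_s-\sin\xi\,Y_s$ one gets
\[
\operatorname{Hess}_{R_s}\rho(V,V)=\frac1\rho,\qquad \operatorname{Hess}_{R_s}\rho(\Theta_s,\Theta_s)=0,\qquad \operatorname{Hess}_{R_s}\rho(V,\Theta_s)=\sinh s\,\bigl(\cos^2\xi-e^{-2s}\sin^2\xi\bigr).
\]
For $s>0$ the mixed entry has three relevant features:
\begin{itemize}
\item it is nonzero off the set where $\tan^2\xi=e^{2s}$;
\item it is independent of $\rho$;
\item it grows with $s$, so it is not ``controlled by $e^{-2s}$''.
\end{itemize}
Hence every level torus $\{\rho=\rho_0\}$ has indefinite second fundamental form. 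Its negative eigenvalue has size of order $\beta^2\rho_0$: it shrinks as $\rho_0\to 0$ but never vanishes. At $\rho=0$ the metric $R_s$ itself degenerates ($|\partial_\phi|_{R_s}\to 0$), so there is no hypersurface to which a limit could transfer semidefiniteness.

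\textbf{Your first plan.} For $s>0$, $\partial_\rho$ is not $R_s$-normal to the level tori, since $R_s(\partial_\rho,\partial_\phi)=(\alpha+1)\rho\,(e^{-2s}-1)\sin\xi\cos\xi$. So $\tfrac{d}{d\rho}R_s(V,V)$ is not the second fundamental form, even though it comes out as $\mathrm{diag}(+,0)$ and seems to confirm the claim. The tangential component of $\partial_\rho$ is exactly what produces $\beta$. Likewise, $-bX+aY$ is $R_s$-orthogonal to $aX+bY$ only when $s=0$.

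\textbf{The paper's proof.} It does not get around this either. It obtains a diagonal matrix by asserting that $dN_s$ takes values in $\R X_s\oplus\R Y_s$ because $[X_s,Y_s]=0$. In effect it differentiates only the coefficients of $N_s$, treats $X_s,Y_s$ as parallel, and discards the Levi-Civita terms listed above. Even within its own formulas, $dN_s(\Theta)=-\frac{1}{\alpha+1}\,dN_s(\partial_\phi)$, so its entry $R_s(dN_s(\Theta),\partial_\phi)$ is not zero.

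\textbf{What is true.} The semidefiniteness holds at $s=0$, where $\beta=0$ and the computation reduces to the Euclidean Hessian of the distance to the vertex. For $s>0$ the computation above shows it fails on every level torus near the cone point. So no sharper estimate can close the gap in your argument; the statement as formulated is not correct for $s>0$.
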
 
\begin{proof} The argument is an elementary calculation that we reproduce here for the convenience of the reader. 

It is enough to prove the result for each connected component of $\partial \hat S_r(M)$, hence for a single cone point. The flat metric at the cone point $p$ with respect to a complex
coordinate $z$ such that $z(p)=0$ is of the form $r= \vert z^\alpha dz \vert$ ($\alpha >-1$). We then compute formulas for the geodesic vector field $X$, orthogonal geodesic vector field $Y$ and the generator $\Theta$ of rotations  in polar coordinates $(z, e^{i\theta} ) \equiv  (\rho, \phi, \theta)$ on the unit tangent bundle of the cone:
$$
\begin{aligned}
X &=  \cos (\theta -(\alpha+1) \phi)  \frac{\partial}{\partial \rho} + \frac{1}{(\alpha+1)\rho}   \sin (\theta-(\alpha+1) \phi) \frac{\partial}{\partial \phi}  \,, \\
Y&=  -\sin (\theta -(\alpha+1) \phi)  \frac{\partial}{\partial \rho} + \frac{1}{(\alpha+1)\rho}    \cos (\theta-(\alpha+1) \phi) \frac{\partial}{\partial \phi}    \Big)\,, \\
\Theta &=  \frac{\partial}{\partial \theta} \,. 
\end{aligned} 
$$
Since by definition of the metric $R_s$, the vector fields $X_s= e^s X$, $Y_s= Y$ and $\Theta_s= e^{-s} \Theta$ are orthonormal and 
$$
\frac{\partial}{\partial \phi} = (\alpha+1)\rho \Big( \sin (\theta-(\alpha+1) \phi)  X +  \cos (\theta-(\alpha+1) \phi)  Y \Big)\,,
$$
it follows that the inner unit normal for the metric $R_s$ 
to the boundary of the cone is given by the formula
$$
N_s =  \frac{ e^{s} \cos (\theta -(\alpha+1) \phi) X_s -  \sin (\theta -(\alpha+1) \phi) Y_s}{\sqrt{e^{2s} \cos^2 (\theta -(\alpha+1) \phi)  + \sin^2 (\theta -(\alpha+1) \phi)   }  }\,. 
$$
Let $\xi = \theta -(\alpha+1) \phi$. We have
$$
\begin{aligned}
\frac{\partial}{\partial \xi}  \Big(\frac{  e^{s} \cos \xi  }{\sqrt{e^{2s} \cos^2\xi  + \sin^2 \xi   } } \Big) =  \frac{  - e^{s} \sin \xi  }{ (e^{2s} \cos^2\xi  + \sin^2 \xi)^{3/2}  }  \,, \\
\frac{\partial}{\partial \xi}  \Big(\frac{ - \sin \xi  }{\sqrt{e^{2s} \cos^2\xi  + \sin^2 \xi   } } \Big) =  \frac{  -e^{2s} \cos \xi  }{ (e^{2s} \cos^2\xi  + \sin^2 \xi)^{3/2}  }  \,, 
\end{aligned} 
$$
The tangent space to the boundary torus is equal to $\R \frac{\partial}{\partial \phi} \oplus \R \Theta$ and we have the formula
$$
 \frac{\partial}{\partial \phi} := \sin (\theta -(\alpha+1) \phi) X_s  +  e^s\cos (\theta -(\alpha+1) \phi) Y_s \,.
$$
By the commutation relations, and in particular since $[X_s, Y_s]=0$, it follows that the range of the map  $dN_s$ is a subspace of $\R X_s \oplus \R Y_s$, 
hence it is contained in $\Theta^\perp$, and 
$$
dN_s (  \frac{\partial}{\partial \phi}  ) = \frac{ (\alpha+1)  [  e^{s} \sin (\theta -(\alpha+1) \phi) X_s + e^{2s} \cos (\theta -(\alpha+1) \phi)   Y_s] }{ (e^{2s} \cos^2 (\theta -(\alpha+1) \phi) + \sin^2  (\theta -(\alpha+1) \phi))^{3/2}   }    
$$
Finally the negative of the second fundamental form has matrix 
$$
\begin{pmatrix} R_s( dN_s (  \frac{\partial}{\partial \phi}) ,  \frac{\partial}{\partial \phi} )  & R_s( dN_s (\frac{\partial}{\partial \phi}), \Theta) \\
 R_s( dN_s ( \Theta) ,  \frac{\partial}{\partial \phi} )  &  R_s( dN_s ( \Theta) ,  \Theta)   \end{pmatrix}  = 
 \begin{pmatrix} R_s( dN_s (  \frac{\partial}{\partial \phi}) ,  \frac{\partial}{\partial \phi} )  & 0 \\
 0 &  0  \end{pmatrix} 
$$
with
$$
R_s\Big( dN_s (  \frac{\partial}{\partial \phi} ),  \frac{\partial}{\partial \phi} \Big) = (\alpha+1) e^s  \frac{ [   \sin^2 (\theta -(\alpha+1) \phi)  + e^{2s} \cos^2 (\theta -(\alpha+1) \phi)   ] }{ (e^{2s} \cos^2 (\theta -(\alpha+1) \phi) + \sin^2  (\theta -(\alpha+1) \phi))^{3/2}   }  \,,
$$
hence the second fundamental form is negative semi-definite and the boundary is infinitesimally geodetically concave.

\end{proof} 

\section{ The cut-locus}

Let us consider the  cut-locus or skeleton $\mathcal C(s) \subset \hat S_r(M)$ for the metric $R_s$ defined as follows: a point $p\in \mathcal C(s)$ if and only if $p\in \hat S_r(M)$ and there exists at least two distinct geodesics $\gamma_1, \gamma_2: [0, 1] \to  \hat S_r(M)$ such that
$$
\begin{aligned} 
\gamma_1 (0)&= \gamma_2(0) = p   \qquad \text{ and }  \qquad \lim_{s\to 1-}  \gamma_1(s),  \lim_{s\to 1-}  \gamma_1(s) \in \partial \hat S_r(M)    \\
&\int_0^1  \Vert  \gamma_1'(s) \Vert_{R_s} ds =  \int_0^1  \Vert  \gamma_2'(s) \Vert_{R_s} ds =  d_s (p, \partial \hat S_r(M)) \,.
\end{aligned} 
$$
Let $\Gamma_s (p, \partial \hat S_r(M))$ denote the set of $R_s$-geodesics $\gamma: [0, 1) \to \hat S_r(M)$  such  that 
$$
\gamma(0)=p \,,    \quad  \lim_{t\to 1-}  \gamma_1(s) \in \partial \hat S_r(M)    \quad \text{ and } \quad        \int_0^1  \Vert  \gamma'(s) \Vert_{R_s} ds =  d_s (p, \partial \hat S_r(M))  \,.
$$
The cut-locus, or skeleton, is by definition the set 
$$
 \mathcal C(s) := \{ p \in \hat S_r(M) \vert    \#   \Gamma_s (p, \partial \hat S_r(M)) >1\}\,.
$$

\begin{lemma} If the geodesic flow $\{ G_t\}$ has no periodic orbits, there exists $s_0>0$ such that for all $s>s_0$  the set $\Gamma_s (p, \partial \hat S_r(M))$ is discrete
and cardinality at most equal to the number $\# C_r$ of cone points.
\end{lemma}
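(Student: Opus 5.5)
The idea is to show that, once $s$ is large, each boundary torus $\{p\}\times\T$ contributes at most one minimizing geodesic from a given point, and that these geodesics are isolated. Fix $v\in \hat S_r(M)$ and $s$ large. By Lemma \ref{lemma:zerodiam}, under the hypothesis of no periodic orbits, $d_s(v,\partial \hat S_r(M))\le \delta(s)$ with $\delta(s)\to 0$. By Lemma \ref{lemma:bc}, the metrics $R_s$ have uniformly bounded sectional curvature with all derivatives. Hence there is $\rho_0>0$, independent of $s$, such that geodesic balls of radius $\rho_0$ not meeting the boundary are strongly convex and contain no conjugate points.

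The key step is an injectivity-type statement near the boundary. The normal exponential map of each boundary torus $T_p=\{p\}\times\T$ should be a diffeomorphism from $T_p\times[0,\rho_1)$ onto a collar neighbourhood $U_p$, with $\rho_1>0$ independent of $s$ for $s>s_0$. For this I would use three ingredients.
\begin{enumerate}
\item Lemma \ref{lemma:convex}: the second fundamental form of $T_p$ is negative semi-definite, so focal points of the normal exponential map can only occur at distance comparable to the inverse of the curvature bound. This gives local injectivity uniformly in $s$.
\item Global injectivity on the collar needs a lower bound, uniform in $s$, on the length of $R_s$-geodesic loops based on $T_p$ and returning to it. The scaling \pref{eq:deformation} shrinks only the $X$-direction. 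The torus $T_p$ is spanned by $\partial/\partial\phi$ (which has a nontrivial $Y_s$-component $e^s\cos\xi$) and $\Theta$. So the $R_s$-geometry transverse to the flow direction does not collapse, and I would argue that the intrinsic injectivity radius of $T_p$ and the normal collar stay bounded below.
\item Distinct cone points $p\ne p'$ are at $R_s$-distance bounded below. Any path from $T_p$ to $T_{p'}$ has to cross a region of $\hat M_r$ whose projected $Y$-length, or whose $\Theta$-variation, is bounded below.
\end{enumerate}

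Granting the collar, the counting goes as follows. Each minimizing $\gamma\in\Gamma_s(v,\partial\hat S_r(M))$ ends at some $T_p$ and meets it orthogonally (first variation). Its final segment of length $\min(\rho_1,\delta(s))$ lies in $U_p$ and is a normal geodesic. Once $\delta(s)<\rho_1$, which is exactly where $s_0$ comes from, the whole of $\gamma$ lies in $U_p$. There it coincides with the unique normal geodesic through $v$, so at most one element of $\Gamma_s$ ends on each $T_p$. This gives $\#\Gamma_s\le\#C_r$, and discreteness is automatic for a finite set.

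The main obstacle is step (2): obtaining the collar width $\rho_1$ uniform in $s$. The deformation is degenerate, with the $X$-direction collapsing. Short geodesic loops could appear by traveling cheaply along $X$ and returning to $T_p$ at a different point, so that two distinct normal geodesics from $T_p$ meet at small distance. To handle this I would first try to compute explicitly in the cone coordinates $(\rho,\phi,\theta)$ of Lemma \ref{lemma:convex}, where the metric $R_s$ is explicit and invariant under $\theta$-translation. I would bound from below the $R_s$-length of any path leaving $T_p$ and returning to it with nonzero displacement along the torus. If that fails, I would reduce to the Heisenberg limit metric from Lemma \ref{lemma:bc}, where the analogous collar can be checked by hand, and then conclude by continuity.
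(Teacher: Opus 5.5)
\textbf{The uniform collar in your step (2) is false, and so is (3).} This is not a technical obstacle that a finer computation could remove: the claim contradicts Lemma~\ref{lemma:zerodiam}, which you also use.

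Suppose the normal exponential map of $T_p$ were a diffeomorphism of $T_p\times[0,\rho_1)$ onto an open collar $U_p$, with $\rho_1$ independent of $s$. Take $s$ with $\delta(s)<\rho_1/3$ and $x=\exp(q,t)$ with $\delta(s)<t<\rho_1/2$. By the Gauss lemma the normal parameter is $1$-Lipschitz on $U_p$. A path from $x$ to $\partial\hat S_r(M)$ either stays in $U_p$ and ends on $T_p$, so has length at least $t$, or leaves $U_p$, so has length at least $\rho_1-t>\rho_1/2$. Hence $d_s(x,\partial\hat S_r(M))>\delta(s)$, which is absurd.

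The mechanism is exactly the one you feared. The inward normal $N_s$ of Lemma~\ref{lemma:convex} equals $X_s$ at $\xi=0$, and is close to $\pm X_s$ wherever $e^s|\cos\xi|\gg|\sin\xi|$. So the collar is measured in the collapsing direction, and the non-collapse of the tangential geometry of $T_p$ is irrelevant. Since $[Y_s,X_s]=0$ and $[\Theta_s,X_s]=Y_s\perp X_s$, the Koszul formula gives $\nabla_{X_s}X_s=0$, so orbits of $X$ are $R_s$-geodesics. A saddle connection of flat length $L$ from $p$ to $p'$, lifted by its unit tangent vectors, is therefore an $R_s$-geodesic of length $e^{-s}L$ meeting $T_p$ and $T_{p'}$ orthogonally; these are the segments used in Lemma~\ref{lemma:Betti_div}. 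This gives $d_s(T_p,T_{p'})\le e^{-s}L\to 0$, refuting (3), and bounds every normal collar width by $e^{-s}L$. Neither an explicit cone-coordinate computation nor the Heisenberg limit can rescue this. Convergence of structure constants (Lemma~\ref{lemma:bc}) is local information, whereas the focal and injectivity distance of the boundary is a global quantity, and it collapses.

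\textbf{How the paper avoids this.} The paper uses no collar; it localizes at the point $v$ rather than at the boundary. You already state the ingredient it needs in your first paragraph: a convexity radius $\rho_0$ (the paper's $\delta_0$), uniform in $s$, for balls inside $\hat S_r(M)$. Set $r_s:=d_s(v,\partial\hat S_r(M))\le\delta(s)<\rho_0$ for $s>s_0$. Then the closed ball $\bar B_s(v,r_s)$ is strictly convex, and within this radius a minimizer from $v$ is determined by its endpoint. By Lemma~\ref{lemma:convex} each boundary torus has negative semi-definite second fundamental form for the inward normal, so the ``hole'' it bounds is convex. The paper argues that the ball therefore touches each component of $\partial\hat S_r(M)$ in at most one point, which gives $\#\Gamma_s\le\#C_r$.

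This argument works at the scale $r_s$, which shrinks together with $\delta(s)$, so it is compatible with the collapse. Yours needs a scale bounded below near $\partial\hat S_r(M)$, which the collapse forbids. Replacing the collar of $T_p$ by the ball $\bar B_s(v,r_s)$ is the repair.
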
 
\begin{proof} Since the family of metric $\{R_s \vert s>0\}$ is compact, there exists $\delta_0>0$ such that the metric balls in the metric $R_s$ of radius at most $\delta_0$
have geodetically strictly convex boundary. If the geodesic flow $\{ G_t\}$ has no periodic orbits, by Lemma \ref{lemma:zerodiam} there exists $s_0>0$ such that 
$\text{\rm diam}_s(S_r, \partial S_t)< \delta_0$, for all $s \geq s_0$. It follows that, for every $p \in \hat S_r(M)$, the closed ball $\bar B_s(p,r_s)$ in the metric $R_s$ 
of center $p$ and radius $r_s:=d_s(p, \partial \hat S_r(M))$ is geodetically strictly convex. Since by Lemma \ref{lemma:convex} the connected components of the boundary are 
geodetically convex, 
the ball $\bar B_s(p,r_s)$ intersects each connected component of $\partial \hat S_r(M)$ in at most one point. 
\end{proof}

We investigate below the topology of the cut-locus and of its complement.  

\begin{lemma} 
\label{lemma:cutlocus}
If the geodesic flow $\{ G_t\}$ has no periodic orbits,  there exists $s_0>0$ such that for all $s\geq s_0$ the cut-locus  $\mathcal C(s)$ is a closed subset of $\hat S_r(M)$ and its complement $\hat S_r(M)\setminus \mathcal C(s)$ has $\# C_r$ connected components, each diffeomorphic to a product $I \times \T^2$ of an open interval $I \subset \R$ with a $2$-dimensional torus. 
\end{lemma}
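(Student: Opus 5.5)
The plan is to describe the complement of the cut-locus through the nearest-boundary-point map. Fix $s\geq s_0$, with $s_0$ given by the preceding lemma. By Lemma \ref{lemma:zerodiam}, every point lies at $R_s$-distance less than $\delta_0$ from $\partial \hat S_r(M)$, and $\delta_0$ is below the uniform convexity radius guaranteed by Lemma \ref{lemma:bc}. Let $T_1,\dots,T_k$, with $k=\# C_r$, be the boundary tori. For $p\notin\mathcal C(s)$, the set $\Gamma_s(p,\partial \hat S_r(M))$ has exactly one element $\gamma_p$; write $\pi(p)\in \partial \hat S_r(M)$ for its endpoint. Define
$$
U_i:=\{p\in \hat S_r(M)\setminus \mathcal C(s) \vert \pi(p)\in T_i\}.
$$
The goal is to show that each $U_i$ is open and connected, that the $U_i$ are pairwise disjoint, and that each $U_i$ is diffeomorphic to $I\times\T^2$. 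Closedness of $\mathcal C(s)$ will follow once the complement is shown to be open.

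First, closedness of $\mathcal C(s)$. Suppose $p_n\to p$ with $p_n\in \mathcal C(s)$, and pick two distinct minimizers from each $p_n$. By Arzel\`a--Ascoli these subconverge, using the bounded geometry of Lemma \ref{lemma:bc} and the fact that lengths are below $\delta_0$, to minimizers from $p$. The main issue is to rule out that the two limits coincide. If the two minimizers of $p_n$ end on different tori $T_i\neq T_j$, their limits stay distinct. If they end on the same torus, then the preceding lemma (strict convexity of small balls, combined with the concavity of Lemma \ref{lemma:convex}) says this cannot happen for $s\geq s_0$. So at each point of $\mathcal C(s)$ the distance is realized on at least two different tori, and this property passes to the limit: $p$ is equidistant from two tori. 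Hence $\mathcal C(s)$ equals $\bigcup_{i\ne j}\{d_s(\cdot,T_i)=d_s(\cdot,T_j)=d_s(\cdot,\partial \hat S_r(M))\}$, which is closed.

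Next, the product structure. On $U_i$ the function $f_i=d_s(\cdot,T_i)$ coincides with $d_s(\cdot,\partial)$. It has no focal points along minimizers of length less than $\delta_0$: the second fundamental form of $T_i$ is semi-definite by Lemma \ref{lemma:convex}, and the curvature is bounded by Lemma \ref{lemma:bc}, so comparison forbids focal points at such short length once $\delta_0$ is small. Therefore the normal exponential map
$$
(t,q)\mapsto \exp_q(tN_s(q))
$$
is a local diffeomorphism on $\{0< t<\tau_i(q)\}$, where $\tau_i(q)$ is the cut time. It is injective there by uniqueness of minimizers, and its image is exactly $U_i$. The cut time $\tau_i$ is continuous, by the standard argument for the cut function, and positive. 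The domain $\{(t,q)\,:\,0<t<\tau_i(q)\}$ is therefore diffeomorphic to $(0,1)\times T_i$ after rescaling $t$ by $\tau_i(q)$. This shows that $U_i$ is open, connected and diffeomorphic to $I\times\T^2$, and the $U_i$ are disjoint by construction.

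The expected main obstacle is the genuine smoothness and positivity of the cut time $\tau_i$, as opposed to mere continuity, near points where the distance-minimizer to $T_i$ fails to be unique in a degenerate way. I would first try to settle this by using the semi-definiteness in Lemma \ref{lemma:convex} together with the smallness $\delta_0$ to exclude conjugate-type degeneracy. Failing that, I would settle for a homeomorphism, obtained from continuity of $\tau_i$, and smooth it afterwards, since only the topology is needed later.
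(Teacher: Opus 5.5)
Your proposal takes the paper's route, granting the earlier lemmas as the paper does. Closedness comes from passing pairs of distinct minimizers to the limit and using the preceding lemma (strictly convex small balls meet each boundary torus at most once) to force the limits onto distinct tori. The product structure comes from the focal-point-free normal exponential map of each torus, which is the paper's ``tubular neighbourhood'' step.

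Your version is more explicit: it writes $\mathcal C(s)$ as a union of equidistance sets of pairs of distinct tori and parametrizes each component by the continuous cut time. Like the paper's own proof, which in the end only asserts a homeomorphism, this gives each component $\cong I\times\T^2$ up to homeomorphism, to be smoothed if a diffeomorphism is required.
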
 
\begin{proof}  There exists $s_0>0$ such that, for all $s \geq s_0$, if $p\not \in \mathcal C(s)$, there exists a unique minimizing geodesic $\gamma_p$  joining $p$ to $\partial \hat S_r(M)$, whose length equals $d_s(p, \partial \hat S_r(M))$. In fact, each component 
of the boundary  $ \partial \hat S_r(M) $ is geodesically convex in $(\hat S_r(M) , R_s)$ and, since  in addition the one-parameter family
of metrics $\{R_s\vert s>0\}$ belongs to a compact set, there exists $\delta_0>0$ such that when $d_s(\hat S_r(M), \partial \hat S_r(M))
< \delta_0$, then there are no focal points of $\partial \hat S_r(M)$ in $\hat S_r(M) \setminus \mathcal C(s)$.  In fact, closed balls of radius at most $\delta_0>0$ are
geodetically strictly convex and the exponential map from their center is injective. 
It follows then from Lemma
\ref{lemma:zerodiam} that there exists $s_0>0$ such that $d_s(\hat S_r(M), \partial \hat S_r(M)) < \delta_0$ for all $s \geq s_0$. 

\smallskip
\noindent If the cut-locus $\mathcal C(s)$ is not closed, since the manifold is compact, there exists a converging sequence $(p_n) \subset \mathcal C(s)$ such that
its limit $p\not\in \mathcal C(s)$. For each $n\in \N$, let $q^{(1)}_n \not = q^{(2)}_n \in \partial \hat S_r(M) $ be closest points to $p_n$.  By compactness it is not restrictive
to assume that $(q^{(1)}_n)$ and $(q^{(2)}_n)$ are convergent to points $q^{(1)}$ and $q^{(2)} \in \partial \hat S_r(M) $. Such points realize the distance of $p$ from 
$\partial \hat S_r(M)$. In fact,  if $d(p, \partial \hat S_r(M)) < d(p,q^{(1)})=d(p,q^{(2)})$, then, for $n\in \N$ large enough, we have that $d(p_n, \partial \hat S_r(M)) <d(p_n,q^{(1)}_n)=
d(p_n,q^{(2)}_n)$ in contradiction with the choice of $q^{(1)}_n$ and $q^{(2)}_n$.  

If $p\not \in \mathcal C(s)$, then $q^{(1)}=q^{(2)}$, hence $q^{(1)}_n$ and $q^{(2)}_n$
belong to the same connected component of the boundary.  However, since connected components of the boundary are geodetically convex, closed balls are geodetically
strictly convex implies $q^{(1)}_n = q^{(2)}_n$, a contradiction that concludes the proof that $\mathcal C(s)$ is closed.

\smallskip
\noindent Under the assumption that there are no focal points, it follows that all other geodesics, different from $\gamma_p$, joining $p$ to $\partial \hat S_r(M)$ have length strictly larger than $d_s(p, \partial \hat S_r(M))$.  In addition, the point $p$ belongs to a
geodesic tubular neighborhood  $\mathcal N$ of a connected component of $\partial \hat S_r(M)$, which is homeomorphic to a torus. There exists therefore a neighborhood
$\mathcal U_p$  of $p$ in $\hat S_r(M)$ such that for all $q \in \mathcal U_p$ there exists a unique minimizing geodesics $\gamma_q \subset \mathcal N$ whose length equals
$d_s(q, \partial \hat S_r(M))$. Finally, since every $p\in  \hat S_r(M)$ is at finite distance from $\partial \hat S_r(M)$,  the complement of the cut-locus is the union of tubular neighborhoods
of the boundary tori, hence it is a union of $\# C_r$ components, each homeomorphic to a product $I \times \T^2$, with $I \subset \R$ an open bounded interval.
\end{proof} 

Let $c:= \# C_r \geq 3$ and let $\mathcal N_1(s), \dots,  \mathcal N_c(s)$ denote the connected components of $\hat S_r(M)\setminus \mathcal C(s)$. The metric
completions $\overline{\mathcal N_1}(s), \dots, \overline{\mathcal N_c}(s)$ of the tubular neighborhoods $\mathcal N_1(s), \dots,\mathcal N_c(s)$ are homeomorphic to 
a product $\overline{I} \times \T^2$, with $\overline{I}$ the closure of the open interval $I\subset \R$.  
It follows that there exists a continuous surjective map
$$
\pi_s : \T^2_1(s) \sqcup \dots  \sqcup \T^2_c(s)    \to   \mathcal C(s) 
$$
from a disjoint union of a family of topological tori  $\{ \T^2_i(s) \vert i\in \{1, \dots,c\} \}$ such that $ \T^2_i(s) \subset \partial \overline{\mathcal N_i}(s)$, for all $i\in \{1, \dots,c\}$.
In fact, for each $i\in \{1, \dots,c\}$  the inclusion $J_i : \mathcal N_i(s) \to \hat S_r(M)$ extends by continuity to maps $\bar J_i : \overline{ \mathcal N_i} (s) \to S_r$ and the boundary 
$\partial  \overline{ \mathcal N_i} (s)$ is a union of two connected components $\partial^{(-)}  \overline{ \mathcal N_i} (s) \cup \partial^{(+)}  \overline{ \mathcal N_i} (s)$,
each homeomorphic to a torus, such that 
$$
\bar J_i \left( \partial^{(-)}  \overline{ \mathcal N_i} (s)  \right)  \subset \partial \hat S_r(M) \quad \text{ and } \quad \bar J_i \left( \partial^{(+)} 
 \overline{ \mathcal N_i} (s)  \right)  \subset  \mathcal C(s) \,.
$$
We then define $ \T^2_i(s) :=   \partial^{(+)}  \overline{ \mathcal N_i} (s) $ and $\pi_s \vert  \T^2_i(s) = \bar J_i$, for all $i\in \{1, \dots,c\}$.

\medskip
\noindent Let $\mathcal{C}^{\ast}(s) \subset \mathcal C(s)$ be the set of points of $\hat S_r(M)$ which are equidistant from exactly $2$
(possibly equal) boundary tori, that is, 
$$
 \mathcal C^\ast (s) := \{ p \in \hat S_r(M)  \vert   \#  \Gamma_s (p, \partial \hat S_r(M)) = 2 \}\,.
$$
\begin{lemma} 
\label{lemma:smooth}
If the geodesic flow $\{ G_t\}$ has no periodic orbits, there exists $s_0:= s_0(M)>0$ such that,  for all $s>s_0$, every connected component  of 
the set $ \mathcal C^\ast (s) \subset \mathcal C(s)$ of regular points is open in the cut-locus $\mathcal C(s)$, 
 and it is a smooth $2$-manifold with piece-wise smooth boundary. 
\end{lemma}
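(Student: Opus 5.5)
We realize $\mathcal C^\ast(s)$ locally as the zero set of a difference of two smooth distance functions; the implicit function theorem then gives smoothness, and the boundary comes from where a third foot point appears. Throughout we fix $s_0$ so large that Lemma~\ref{lemma:zerodiam} gives $d_s(\hat S_r(M),\partial \hat S_r(M))<\delta_0$. Here $\delta_0$ is the uniform radius from the proof of Lemma~\ref{lemma:cutlocus}, below which closed $R_s$-balls are strictly convex, exponential maps are injective, and (by Lemma~\ref{lemma:convex}) there are no focal points of the boundary tori within distance $\delta_0$. Lemma~\ref{lemma:bc} makes $\delta_0$ independent of $s$.

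\emph{Step 1: local normal form.} Fix $p\in\mathcal C^\ast(s)$. By the preceding lemmas $\Gamma_s(p,\partial\hat S_r(M))$ is discrete and consists of exactly two geodesics $\gamma_1,\gamma_2$, ending at distinct foot points $q_1\neq q_2$. We expect these to lie on two different tori $T_1,T_2$, since the ball $\bar B_s(p,r_s)$ meets each component in at most one point. For $i=1,2$ we set $f_i(x):=d_s(x,T_i)$. Since $q_i$ is not focal and is the unique closest point of $T_i$, the normal exponential map of $T_i$ is a local diffeomorphism near $(q_i,r_s)$. Hence $f_i$ is smooth near $p$, with gradient $\nabla f_i(p)=-\gamma_i'(0)/|\gamma_i'(0)|$.

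\emph{Step 2: the two distances separate at $p$.} Because $\gamma_1\ne\gamma_2$ leave $p$ in different directions, $\nabla(f_1-f_2)(p)\neq 0$. The implicit function theorem then makes $\{f_1=f_2\}$ a smooth embedded surface near $p$. All other boundary components, and all non-minimizing geodesics to $T_1,T_2$, give distances strictly larger than $r_s$ at $p$. By the continuity argument in the proof of Lemma~\ref{lemma:cutlocus} (limits of foot points are foot points), there is a neighbourhood $U_p$ with $\mathcal C(s)\cap U_p=\{f_1=f_2\}\cap U_p$ and $\#\Gamma_s=2$ on it. This shows $\mathcal C^\ast(s)$ is open in $\mathcal C(s)$ and is a smooth $2$-manifold at each of its points.

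\emph{Step 3: the boundary.} A component of $\mathcal C^\ast(s)$ has frontier contained in $\mathcal C(s)\setminus\mathcal C^\ast(s)$, the set where $\#\Gamma_s\ge3$. The count is bounded by $\#C_r$ by the discreteness lemma. Near such a point $p$ with foot tori $T_1,\dots,T_k$, the same argument makes each $f_i$ smooth. The strata $\{f_i=f_j\le f_l\ \forall l\}$ are then pieces of smooth surfaces cut out by finitely many smooth inequalities, and their edges $\{f_i=f_j=f_l\}$ are smooth curves wherever the gradients $\nabla f_i$ are affinely independent. This gives piecewise smooth boundary.

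\emph{Main obstacle.} The hard part is this boundary analysis, namely transversality of the triple-equality loci and points with $k\ge4$ where gradients may be degenerate. I would first try to handle it using the fact that the unit vectors $\nabla f_i(p)$ are pairwise distinct. Distinct unit vectors make each difference $f_i-f_j$ a submersion. The remaining degenerate configurations, where affine independence fails, I would treat as isolated corner points, which are acceptable in a piecewise smooth boundary.
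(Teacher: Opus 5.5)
Your proposal is essentially the paper's proof: smooth local distance functions $f_i=d_s(\cdot,C_i)$ near $p$, the implicit function theorem from the distinctness of the two foot points, openness from $\mathcal C(s)\cap\mathcal V_p=\{f_1=f_2\}\cap\mathcal V_p$, and a boundary given by the triple-equality curves. Your condition $\nabla f_1(p)\neq\nabla f_2(p)$ is the right one and is sharper than the paper's ``linearly independent gradients'', which fails when the two gradients are antipodal.

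The degeneracy you fear on the triple loci cannot occur: three distinct unit vectors are never collinear, so $\nabla(f_1-f_2)(p)$ and $\nabla(f_2-f_3)(p)$ are always independent, and this is exactly how the paper obtains smooth curves. Points with four or more foot points are not treated in the paper at all, so your ``isolated corner'' claim there remains unproved, but it is a gap the paper shares rather than a departure from it.
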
 
\begin{proof}  Let $p\in \mathcal C^{\ast}(s)$ and let $C_1:=C_1(p)$ and $C_2:=C_2(p)$ denote the connected components of  $\partial \hat S_r(M)$ such that there exist $p_1\in C_1$
and $p_2 \in C_2$ such that 
$$
d_s (p, p_1) = d_s(p, p_2) =   d_s(p, \partial \hat S_r(M)) \,.
$$
There exists a neighborhood $\mathcal U_p \subset \hat S_r(M)$ of $p$ such that 
the functions $d_s (\cdot, C_1)$ and $d_s(\cdot, C_2)$ are smooth on $\mathcal U_p$. 
It follows from the hypothesis that $\# \Gamma_s(p, \partial \hat S_r(M)) =2$ that  there exists a neighborhood 
$\mathcal V_p\subset \mathcal U_p$ such that 
$$
\mathcal C(s) \cap  \mathcal V_p = \{ p' \in \mathcal V_p \vert  d_s (p', C_1)= d_s (p', C_2) \} \,,
$$
hence we conclude that  $\#  \Gamma_s (p', \partial \hat S_r(M)) = 2$ for all $p' \in \mathcal C(s) \cap  \mathcal V_p$. Since
the above argument holds for any point $p \in \mathcal C^\ast (s)$ we have proved that $ \mathcal C^\ast (s)$ is
open in $ \mathcal C(s)$.

\noindent Let $d_s:= d_s (\hat S_r(M), \partial \hat S_r(M))$ denote the maximal distance of points of $\hat S_r(M)$ from the boundary $ \partial \hat S_r(M)$, that is, 
the relative diameter of the pair $(\hat S_r(M), \partial \hat S_r(M))$, with respect to the Riemannian metric $R_s$ on $\hat S_r(M)$.  Since the metric $R_s$ have uniformly 
bounded sectional curvatures, there exists $d_0$ such that all geodesic balls of the metric $R_s$ contained in $\hat S_r(M)$ of radius less than $d_0$ are strictly convex.  

\noindent  By Lemma~\ref{lemma:zerodiam}  there exists $s_0$ such that $d_s < d_0$  for all $s>s_0$. 
If $p\in \mathcal C^\ast (s)$, then there exists a (strictly convex) geodesic ball $B_s(p, d) \subset \hat S_r(M)$ of radius $d < d_s$ such that 
$\partial B_s(p, d) \cap \partial \hat S_r(M)= \{p_1, p_2\}$ with $p_1 \not = p_2$. 
Since $C_1 =C_1(p)$ and $C_2:=C_2(p)$ are convex and the geodesic balls of radius $d<d_s$ are strictly convex, the distance functions $d_s(\cdot, C_1)$ and $d_s(\cdot, C_2)$  from
$C_1$ and $C_2$ respectively have the same gradient at $p$ as the distance functions $d_s(\cdot, p_1)$ and $d_s(\cdot, p_2)$ from the points $p_1$ and $p_2$. Since $p_1\not =p_2$
the distance functions $d_s(\cdot, C_1)$ and $d_s(\cdot, C_2)$, from $C_1$ and $C_2$ respectively, have linearly independent gradients at $p$, hence by the implicit function 
theorem the set $\{x \in \hat S_r(M) \vert d_s(x, C_1) =d_s(x,C_2)\}$ is
a local manifold at $p$.  It follows that every connected component of the set $\mathcal C^\ast(s)$ is a smooth surface. 

The singular set $\mathcal C(s) \setminus \mathcal C^\ast(s)$ is stratified according to the cardinality of the set of point on the boundary which realize the distance to the boundary.
The highest dimensional stratum is given by the subset of points $p \in \hat S_r(M)$ such that there exist three (not necessarily distinct) connected components of the boundary 
$C_1$, $C_2$ and $C_3$ and three distinct points $p_1\in C_1$, $p_2\in C_2$ and $p_3 \in C_3$ such that  
$$
d_s (p, p_1) = d_s(p, p_2) = d_s(p,p_3) = d_s(p, \partial \hat S_r(M)) \,.
$$
Since $p_1$, $p_2$ and $p_3$ are distinct and belong to the boundary of a strictly convex (nearly Euclidean) ball, the differentials at $p$  of the distance functions $d_s(\cdot,C_1)$, 
$d_s(\cdot, C_2)$ and $d_s(\cdot, C_3)$ are equal to the differentials of the distance functions from $d_s(\cdot, p_1)$, $d_s(\cdot, p_2)$  and $d_s(\cdot, p_3)$ from the points $p_1$, $p_2$ and $p_3$
respectively, hence the differential of $d_s(\cdot,C_1)-d_s(\cdot, C_2)$ and $d_s(\cdot, C_2)-d_s(\cdot, C_3)$ are linearly independent. We conclude that the set
$$
\{ x \in \hat S_r(M) \vert  d_s(x,C_1)-d_s(x,C_2)=d_s(x,C_2)-d_s(x,C_3)=0\}
$$
is locally a $1$-dimensional manifold.  Its closure is a compact manifold since the space $\hat S_r(M)$ is compact.  
The argument is therefore complete. 
\end{proof} 

\begin{lemma} 
\label{lemma:homotopy_trivial}
If the geodesic flow $\{ G_t\}$ has no periodic orbits, there exists $s_0:= s_0(M)>0$ such that,  for all $s>s_0$, every connected component  of 
the set $ \mathcal C^\ast (s)$ is homotopically trivial in the cut-locus $\mathcal C(s)$, hence it is homeomorphic to a closed disk with finitely many holes.
\end{lemma}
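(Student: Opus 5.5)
The plan is to exploit the smallness of the relative diameter. By Lemma~\ref{lemma:zerodiam}, for $s$ large every point of $\hat S_r(M)$ lies within $R_s$-distance $d_s<\delta_0$ of the boundary, where $\delta_0$ is the uniform convexity radius from Lemma~\ref{lemma:bc}. Fix a connected component $\Sigma$ of $\mathcal C^\ast(s)$. By the proof of Lemma~\ref{lemma:smooth}, to each $p\in\Sigma$ corresponds an unordered pair of distinct footpoints $\{p_1(p),p_2(p)\}\subset\partial\hat S_r(M)$. These footpoints vary continuously on $\Sigma$, since $\Sigma$ is open in $\mathcal C(s)$ and locally is the level set $d_s(\cdot,C_1)=d_s(\cdot,C_2)$. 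The first step is to show that $\Sigma$ lies in a small ball. I will argue this through the two-dimensional projection: two footpoints at distance at most $2d_s$ from one another determine, up to $O(d_s)$, a single short segment joining two cone points (or a cone point to itself) on $M$, because the $\Theta$ direction is contracted by $e^{-s}$ and the $Y$ direction is not.

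The second step is to build a contraction of the loops in $\Sigma$ inside $\mathcal C(s)$. For $p\in\Sigma$, consider the minimizing geodesic segments $[p,p_1(p)]$ and $[p,p_2(p)]$. They lie in the convex ball $\bar B_s(p,d_s)$, which is nearly Euclidean. I would show that the union of the geodesic segments $[p_1(p),p_2(p)]$ over $p\in\Sigma$ sweeps out a region of $\hat S_r(M)$ that retracts onto a neighborhood of a point. Equivalently, I would show that the map $p\mapsto(p_1(p),p_2(p))$ lands in a contractible subset of $\partial\hat S_r(M)\times\partial\hat S_r(M)$ lying off the diagonal. Here the convexity of the boundary tori from Lemma~\ref{lemma:convex} is what forces the footpoints to stay in a small disk of each torus. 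Then $\Sigma$, which is the bisector between two nearby disks, is a graph over a disk in a convex chart, and so it is contractible in $\mathcal C(s)$. Since $\Sigma$ is also a smooth surface with piecewise smooth boundary (Lemma~\ref{lemma:smooth}), compactness of the closure and finiteness of the singular strata give finitely many boundary curves. Hence $\Sigma$ is a closed disk with finitely many holes.

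The main obstacle is the contractibility claim in the second step. A bisector component could a priori wrap around a torus direction: for instance, if the footpoints run along a closed curve of a boundary torus, $\Sigma$ could be an annulus essential in $\hat S_r(M)$. To exclude this I would first try the following argument. A nontrivial loop of footpoints on the torus over $p\in C_r$ would project, via the correspondence of the first step, to a continuous family of saddle connections from $p$ whose directions sweep a full circle of the torus. Such a family would yield an open strip of arbitrarily large height, contradicting Corollary~\ref{cor:maxheigth}, or else a cylinder of parallel closed geodesics, contradicting the hypothesis of no periodic orbits. If this fails, the fallback is to prove only that each loop in $\Sigma$ bounds a disk in $\mathcal C(s)$ after adding lower strata, which is what the statement literally requires, by coning the loop to a triple point along the one-dimensional strata.
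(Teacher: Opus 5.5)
Your overall strategy is in the spirit of the paper: attach to each $p\in\Sigma$ its footpoints and show that this data ranges over a contractible set. The paper sends $p$ to the point $\mathcal R(p)$ of the chord joining its two footpoints, asserts that $\mathcal R$ is injective, and argues that the chords in a component of $\Gamma_s(d)$ all project close to one \emph{fixed} broken geodesic $\sigma$ between cone points. This $\sigma$ may pass through intermediate cone points with angles $\geq\pi$, so it need not be a single saddle connection. Since an $R_s$-path of length $d$ turns by at most $de^{-s}$, the chords lie in a contractible conical neighborhood of $\sigma$.

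There is a genuine gap, though. The step you flag as the main obstacle is the whole content of the lemma, and the tools you propose for it do not work.

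\begin{itemize}
\item The scaling is reversed. By definition of $R_s$, $|X|_{R_s}=e^{-s}$, $|Y|_{R_s}=1$ and $|\Theta|_{R_s}=e^{s}$, so $\Theta$ is stretched, not contracted. What ties a footpoint pair to a path between cone points is that an $R_s$-path of length $\ell$ turns by at most $e^{-s}\ell$ radians, drifts at most $\ell$ transversally, and may travel up to $e^{s}\ell$ along the flow.
\item With the correct scaling, your first step is unjustified. A chord may drift transversally by order $d_s$, so points of $\Sigma$ attached to the same saddle connection of flat length $L$ can have base directions differing by an angle of order $d_s/L$. Joining them costs order $e^{s}d_s/L$ in $R_s$, which is not controlled by $d_s$. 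Nothing places $\Sigma$ in a single convex ball of radius $<\delta_0$, so the ``graph over a disk in a convex chart'' has no footing. In any case, being inessential in a chart of $\hat S_r(M)$ is not the same as being homotopically trivial in $\mathcal C(s)$.
\item Convexity of the boundary cannot confine the footpoints. In Lemma~\ref{lemma:convex} the second fundamental form vanishes identically in the $\Theta$ direction, which is exactly the fiber circle along which you fear wrapping. Convexity only gives at most one footpoint per boundary component in a small ball.
\end{itemize}

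Your proposed way out of the wrapping problem also misses the mechanism. Saddle connections, and broken geodesics through cone points, form a countable set. A ``continuous family of saddle connections whose directions sweep a full circle'' is therefore simply constant, and Corollary~\ref{cor:maxheigth} or cylinders of closed geodesics play no role. What is needed is the following.
\begin{itemize}
\item[(a)] The combinatorial datum of $p\in\Sigma$ is the homotopy class $\sigma$ of the projected chord relative to the cone points. It is locally constant because footpoints and chords vary continuously, hence constant on $\Sigma$.
\item[(b)] Once $\sigma$ is fixed, smallness of $d_s$ and the turning bound confine the footpoints on each torus to a small neighborhood of the end data of $\sigma$. This neighborhood is a closed disk $\Delta$ in that torus.
\end{itemize}
Then the inclusion $\Sigma\hookrightarrow\mathcal C(s)$ factors as the footpoint map $\Sigma\to\Delta$ followed by the restriction to $\Delta$ of the map $H_s$ (a boundary point goes to the end of its normal geodesic in $\mathcal C(s)$). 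Hence the inclusion is null-homotopic. Moreover, $\bar\Sigma$ is homeomorphic to its footpoint region, a compact subsurface of $\Delta$ with piecewise smooth boundary, i.e.\ a disk with finitely many holes. The paper reaches the same point through the injective map $\mathcal R$ instead.

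Your fallback of coning loops to a triple point along the one-dimensional strata is not an argument. It also could not give the disk-with-holes conclusion, which needs $\bar\Sigma$ to sit inside a disk.
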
 
\begin{proof}
\noindent We will prove that the closure of each connected component of ${\mathcal C}(s)^\ast $ is a subset of a closed contractible set. Since it is
homeomorphic to a closed subsets with piecewise differentiable boundary  of a boundary component of $\hat S_r(M)$, a $2$-torus, it will follow that 
it is homeomorphic to a closed disk with (possibly) finitely many holes.
 
 \noindent Let $p\in \mathcal C^{\ast}(s)$ and let $C_1:=C_1(p)$ and $C_2:=C_2(p)$ denote the connected components of  $\partial \hat S_r(M)$ such that 
 there exist $p_1\in C_1$ and $p_2 \in C_2$ such that 
$$
d_s (p, p_1) = d_s(p, p_2) =   d_s(p, \partial \hat S_r(M)) \,.
$$
 Let $\gamma$ be the unique length minimizing geodesic segment (in the ball $B_s(p,r)$)  with endpoints $p_1$ and $p_2$. The set of points equidistant from $p_1$ and $p_2$ 
within the ball $B_s(p, d)$ is a smooth hypersurface $\mathcal S$ which intersects $\gamma$ in a single point ${\mathcal R} (p)$, thereby defining a continuous map ${\mathcal R}: {\mathcal C}(s)^\ast \to \Gamma_s(d)$  from  $ \mathcal C^\ast (s)$  to the union  of the set $ \Gamma_s(d)$ of interior geodesics with endpoints in $\partial \hat S_r(M)$ of $R_s$-length at most~$2d$.  

\noindent The map ${\mathcal R}$ is injective since on the surface of points equidistant from a geodesic (contained in a ball of radius $d$) the normals to  the boundary of $\hat S_r(M)$ at its endpoints have at most one intersection. The image of ${\mathcal R}$ contains at most one point for each geodesic (the point equidistant from the two boundary components), hence the image of the closure of each connected component of $ \mathcal C^\ast (s)$ is homeomorphic to a closed subset of a connected component of the set $\Gamma_s(d)$.

\noindent We then prove that the closure of  each connected component of the set $ \Gamma_s(d)$ is contractible.  In fact, the projection on $M$ of all geodesics in
a given connected component of $ \Gamma_s(d)$ are homotopic to a union $\sigma$ of straight segments connecting cone points
of total $R_s$-length at most $d$ forming angles all at least equal to $\pi$. The latter condition implies that the total angle variation along the piece-wise linear curve $\sigma$ is at most $d e^{-s}$.  Since all geodesics in $ \Gamma_s(d)$ have $R_s$-length at most $d$, it follows that they all have  unit tangent vector component in a neighborhood of size at most $d e^{-s}$ of the union of the unit tangent vectors of $\sigma$, hence their projection on $M$ is contained in the intersection of conical neighborhood of $\sigma$ of angle at most $d e^{-s}$ based at its endpoints. Thus, under the condition that $d e^{-s} <1$, all geodesics in a connected component of $ \Gamma_s(d)$ belong to a contractible subset of $\hat S_r(M)$, hence
they form a contractible set.
\end{proof}

\noindent In fact, we can prove that the closure of every connected component of ${\mathcal C}^*(s)$ is homomorphic to a disk, although it is not necessary for the proof of the
main theorem on existence of periodic orbits.

\begin{addendum} 
Under the hypotheses of Lemma~\ref{lemma:homotopy_trivial} the closure of each connected component of $\mathcal C^*(s)$ is homeomorphic to a closed disk.
\end{addendum}
\begin{proof} Since the closure of every connected component of $\mathcal C^*(s)$ is homeomorphic to a closed disk with finitely many holes, it remains to prove that its boundary 
has a single connected component. Let $C_1$, $C_2$ denote connected components of the boundary of $\hat S_r(M)$ (not necessarily distinct). Let $d_s(\cdot, C_1)$, $d_s(\cdot, C_2)$  be distinct branches of the distance functions to the boundary components $C_1$ and $C_2$ respectively. Let $D_{12}$  denote the connected component of $\mathcal C^*(s)$ such that $ d_s(\cdot, C_1) - d_s(\cdot, C_2)=0$ on $D_{12}$.  Let $\Sigma_{12}$ be the open smooth surface defined by the condition that the distance  $d_s(\cdot, C_1)$ from $C_1$ 
and $d_s(\cdot, C_2)$ from $C_2$ are equal. Clearly by definition $D_{12} \subset  \Sigma_{12}$. If $D_{12}$ is not contractible, then $\Sigma_{12} \setminus D_{12}$ has
a relatively compact  open connected component $D'_{12}$  such that $\partial D'_{12} \subset \partial D_{12} \subset \Sigma_{12}$. By the definition of cut-locus there exists a finite set 
$\mathcal C$ of boundary components of $\hat S_r(M)$ (not necessarily distinct or distinct from $C_1$ and $C_2$)  such that the distinct branches of the distance functions 
$d_s(\cdot, C)$ from $C\in \mathcal C$ have the following property:
$$
D'_{12} \cap  \{ x\in \Sigma_{12} \vert d_s(x, C) < d_s(x, C_1) =d_s(x,C_2) \}  \not =\emptyset\,, \quad \text{ for all }  C \in \mathcal C\,.
$$
However, let $p \in \bar D'_{12}$ be a point which maximizes the functions $d_s(\cdot, C_1)= d_s(\cdot,C_2)$.  Since the distance functions $d_s(\cdot, C_1)$ and
$d_s(\cdot,C_2)$ are distinct branches, they have no interior local maximum on $\Sigma_{12}$. In fact, at any (interior) critical point  on $\Sigma_{12}$ the gradients of 
$d_s(\cdot, C_1)$  and $d_s(\cdot,C_2)$ are collinear, hence the critical point  (if it exists) is the midpoint of the shortest geodesics joining $C_1$ and $C_2$. 
Thus $p\in \partial D'_{12}$ and there exists $C\in \mathcal C$ such that  $d_s(p, C)= d_s(p, C_1)= d_s(p,C_2)$. As a consequence there exists points $p'$ near $p$
such that $p' \in \Sigma_{12} \setminus \bar D'_{12}$, and moreover $d_s(p', C) < d_s(p, C_1)= d_s(p,C_2)$. In fact, such points can be found my following the gradient
lines of the function $d_s(\cdot, C)$ in the decreasing direction.  Such a contradiction concludes the proof.

\end{proof}

\noindent The topological structure of the cut-locus $\mathcal C(s)$ (for $s$ large) under the hypothesis that there are no periodic orbits is summarized by the notion introduced below.

\begin{definition}  \label{def:simple_branched_surface}
A {\it closed simple branched surface} is a type of branched surface defined as follows. Let $(\Sigma_0, \mathcal D, \sigma)$ be the data of an orientable closed surface $\Sigma_0$, a finite  cover $\mathcal D$ of $\Sigma_0$ by homotopically trivial closed subsets which are neighborhood deformation retracts in $\Sigma_0$, and $\sigma$ an involution on  $\mathcal D$ such that 
$$
\sigma (\bar D) \cap \bar D = \emptyset \quad \text{and} \quad  \bar D \cap  \bar D' \not= \emptyset \Longrightarrow  \sigma ( \bar D ) \cap  \sigma ( \bar D') =\emptyset\,,  \quad \text{ for all } \bar D, \bar D' \in \mathcal D \,.
$$
Let us assume that there exists a set  $\Phi_\sigma:= \{ \phi_{\bar D} \in \text{Homeo} (\bar D, \sigma(\bar D))\}$ of homeomorphism such that 
$$
\phi_{\sigma(\bar D)}  = \phi^{-1}_{\bar D}  \,, \quad \text{ for all } \bar D \in \mathcal D\,.
$$
Let $\approx_{\mathcal D}$ denote the equivalence relations
$$
x \approx_{\mathcal D} y \Longleftrightarrow  (x,y) = (x, \phi_{\bar D } (x) ) \in  \bar D \times \sigma(\bar D)\,, \quad \text{ for all }(x,y) \in \Sigma_0\times \Sigma_0\,.
$$
A closed simple branched surface  is a topological space defined as the  quotient space $\Sigma = \Sigma_0 / \approx_{\mathcal D}$ of the 
surface $\Sigma_0$ with respect to the equivalence relation  $\approx_{\mathcal D}$.
\end{definition}
\begin{rmk} By Definition \ref{def:simple_branched_surface}, any closed simple branched surface is Hausdorff since it is a quotient of a topological surface
with respect to a closed relation (as the cover $\mathcal D$ is given by closed subsets), and it is compact, since the surface $\Sigma_0$ is compact. The definition
does not require $\Sigma_0$ to be connected, hence a closed simple branched surface may or may not be connected. 
\end{rmk}

\smallskip
\noindent The real homology of a closed simple branched surface can be bounded below in terms of the cardinality of the defining cover $\mathcal D$, which defines it up
to homeomorphisms,  as it can be derived by induction from the following result.

\begin{lemma} \label{lemma:Betti_induction_step} Let $X$ be a path-connected Hausdorff topological space and let  ${\bar D}_1 ,  {\bar D}_2 \subset X$ be disjoint closed homeomorphic
subsets. Let $\approx$ be the closed equivalence relation in $X$ that identifies ${\bar D}_1$ and
 ${\bar D}_2$ by a homeomorphism $\phi_{12}: \bar D_1 \to \bar D_2$, that is 
$$
x_1 \approx x_2 \Longleftrightarrow  x_1=x_2 \quad \text{or} \quad  (x_1, x_2) \in {\bar D}_1 \times {\bar D}_2 \text{ and }  x_2 =\phi_{12} (x_1)\,,
$$
and let $\hat X = X/\approx$ be the quotient space. Let us assume that there exist open neighborhoods ${\bar D}_1 \subset \mathcal U_1 \subset X$ and
${\bar D}_2 \subset \mathcal U_2 \subset X$ such that  $\mathcal U_1$ and $\mathcal U_2$ are
both homotopically trivial in $X$. Then
$$
\text{ dim } H_1(\hat X, \R) \geq  \text{ dim } H_1(X, \R)  + 1 \,.
$$
\end{lemma}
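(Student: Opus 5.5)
The plan is to use Mayer--Vietoris. We cover $\hat X$ by two open sets. The first, $A$, is the image of $X\setminus(\bar D_1\cup\bar D_2)$ together with a small neighborhood of the identified set $\bar D:=[\bar D_1]=[\bar D_2]$. The second, $B$, is the image $q(\mathcal U_1\cup\mathcal U_2)$ under the quotient map $q:X\to\hat X$. We may shrink so that $\mathcal U_1\cap\mathcal U_2=\emptyset$, which is possible since $X$ is Hausdorff and the $\bar D_i$ are disjoint closed sets; here I will assume enough separation, e.g.\ normality or paracompactness of $X$. The key point is that $B$ is the wedge-like union $\mathcal U_1\cup_{\phi_{12}}\mathcal U_2$ glued along $\bar D$.

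A cleaner route is to compare $\hat X$ with the space $X'=X\cup_{\bar D_1\sqcup\bar D_2}(\bar D_1\times[0,1])$, the mapping cylinder of the identification. In $X'$ we attach a ``handle'' $\bar D_1\times[0,1]$ with $\bar D_1\times\{0\}$ glued to $\bar D_1$ and $\bar D_1\times\{1\}$ glued to $\bar D_2$ via $\phi_{12}$. The map $X'\to\hat X$ collapsing the intervals is a homotopy equivalence provided $(X,\bar D_1\sqcup\bar D_2)$ is a cofibration. This is where the neighborhood deformation retract hypothesis of Definition~\ref{def:simple_branched_surface} enters, and I will assume it in the form that the $\mathcal U_i$ deformation retract onto $\bar D_i$.

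For $X'$ we apply Mayer--Vietoris with $U=X$ thickened slightly and $V=$ the open handle $\bar D_1\times(0,1)$ thickened. Then $U\cap V\simeq\bar D_1\sqcup\bar D_2$, giving the exact sequence
$$H_1(U)\oplus H_1(V)\to H_1(X')\to H_0(\bar D_1\sqcup\bar D_2)\to H_0(X)\oplus H_0(\bar D_1)\,.$$
The kernel of the last map is one-dimensional: the class $[d_1]-[d_2]$ maps to $0$ in $H_0(X)$ because $X$ is path-connected, and to $0$ in $H_0(\bar D_1)$ because the two copies go to the same point. If $\bar D_1$ has several path components, I would instead count components and obtain at least one new class, since the sign pattern still yields a nonzero kernel element. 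This gives the loop class, a path in $X$ from $d_1$ to $d_2=\phi_{12}(d_1)$ closed up through the handle, mapping onto a new generator.

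It remains to show that $H_1(X)\to H_1(X')$ is injective, so that the dimension count gives $\dim H_1(\hat X)\ge\dim H_1(X)+1$. The preceding term is $H_1(U\cap V)=H_1(\bar D_1)\oplus H_1(\bar D_2)$, mapping into $H_1(U)\oplus H_1(V)$. The kernel of $H_1(U)\oplus H_1(V)\to H_1(X')$ is the image of this group, and its projection to $H_1(X)$ is the image of $H_1(\bar D_1)\oplus H_1(\bar D_2)\to H_1(X)$. That image vanishes because $\bar D_i\subset\mathcal U_i$ and $\mathcal U_i$ is homotopically trivial in $X$, so the inclusion induces zero on $H_1$. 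Hence $H_1(X)$ injects, and the one-dimensional kernel in $H_0$ contributes an extra dimension.

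The main obstacle is the point-set step of identifying $\hat X$ with $X'$ up to homotopy. For this I would use that the $\mathcal U_i$ give collar-type deformation retractions, so that the quotient by the closed relation is homotopy equivalent to the double mapping cylinder. If that fails in full generality, I would state the lemma under the neighborhood-deformation-retract hypothesis, which is exactly what is available in the application.
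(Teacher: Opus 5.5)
The homological computation on $X'$ is correct, and it uses less than you assume. The set $\bar D_1\times(0,1)$ is already open in $X'$. The collared set $U=X\cup\bar D_1\times\bigl([0,\epsilon)\cup(1-\epsilon,1]\bigr)$ deformation retracts onto $X$ with no hypothesis. The degree-$0$ kernel is $\{(z,-z)\}\cong H_0(\bar D_1,\R)\neq 0$. Injectivity of $H_1(X)\to H_1(X')$ uses only that $H_1(\bar D_i,\R)\to H_1(X,\R)$ vanishes, with no disjointness or normality.

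\textbf{The gap} is the transfer from $X'$ to $\hat X$, the step you flagged yourself. The collapse $X'\to\hat X$ is a homotopy equivalence when $\bar D_1\sqcup\bar D_2\hookrightarrow X$ is a cofibration, but the lemma supplies nothing of the kind. Beyond being closed and disjoint, its only hypothesis on the $\bar D_i$ is that they have open neighborhoods null-homotopic in $X$. For example, take $X=[0,3]$, $\bar D_1=\{0\}\cup\{1/n\}_{n\geq 1}$, $\bar D_2=\bar D_1+2$, $\mathcal U_1=[0,3/2)$, $\mathcal U_2=(3/2,3]$. This satisfies every hypothesis, yet $(X,\bar D_1)$ is not an NDR pair. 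Any neighborhood of $\bar D_1$ contains an interval $[1/(n+1),1/n]$, and no map fixing $\bar D_1$ can send it into the totally disconnected set $\bar D_1$. So as written you prove a weaker lemma, and your fallback of adding an NDR hypothesis changes the statement. Even then, ``$\mathcal U_i$ deformation retracts onto $\bar D_i$'' gives a cofibration only with extra point-set input, such as a deformation rel $\bar D_i$ and $\bar D_i$ being a zero set.

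\textbf{The fix.} The detour through $X'$ is unnecessary. Your first idea works once $A$ is exactly the complement of the glued set, not enlarged by a neighborhood of $\bar D$; this is the paper's route.
\begin{itemize}
\item Shrink so that $\mathcal U_1\cap\mathcal U_2=\emptyset$. This needs only disjoint neighborhoods of disjoint closed sets, which the paper also assumes tacitly and which holds in the compact Hausdorff spaces of the application.
\item Apply Mayer--Vietoris on $\hat X$ to $A=q\bigl(X\setminus(\bar D_1\cup\bar D_2)\bigr)$ and $B=q(\mathcal U_1\sqcup\mathcal U_2)$. Both are saturated open sets. The map $q$ restricts to homeomorphisms onto $A$ and onto $A\cap B=(\mathcal U_1\setminus\bar D_1)\sqcup(\mathcal U_2\setminus\bar D_2)$, so no homotopy-theoretic comparison is needed.
\item Compare with the Mayer--Vietoris sequence of $X$ for the cover $\{X\setminus(\bar D_1\cup\bar D_2),\ \mathcal U_1\sqcup\mathcal U_2\}$.
\item In degree $1$, the cokernel of $H_1(A\cap B)\to H_1(A)\oplus H_1(B)$ surjects onto $H_1(A)/\mathrm{Im}\,H_1(A\cap B)$. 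This is isomorphic to the corresponding cokernel for $X$ precisely because $H_1(\mathcal U_i,\R)\to H_1(X,\R)$ is zero.
\item In degree $0$, exactness and path-connectedness of $X$ and $\hat X$ show the kernel of $\tilde H_0(A\cap B)\to\tilde H_0(A)\oplus\tilde H_0(B)$ exceeds its analogue for $X$ by $\dim\tilde H_0(\mathcal U_1\sqcup\mathcal U_2)-\dim\tilde H_0(B)$. This is at least $1$, since $B$ merges the component of $\mathcal U_1$ through some $x\in\bar D_1$ with that of $\mathcal U_2$ through $\phi_{12}(x)$.
\end{itemize}
Adding the two gives $\dim H_1(\hat X,\R)\geq\dim H_1(X,\R)+1$ under exactly the stated hypotheses. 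The paper does the degree-one comparison through the exact sequence of the pair $(X,\mathcal U_1\cup\mathcal U_2)$ and excision of $\bar D_1\cup\bar D_2$. Phrasing it as a comparison of the two Mayer--Vietoris sequences also avoids needing $A$ or $B$ to be path-connected.
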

\begin{proof} Since ${\bar D}_1$ and  ${\bar D}_2$ are closed and disjoint, it is not restrictive to assume that $ \mathcal U_1$ and $ \mathcal U_2$
are also disjoint. Let $A = X \setminus ({\bar D}_1 \times {\bar D}_2) \subset \hat X $ and let $B$ defined as
$$
B:= ( \mathcal U_1 \cup  \mathcal U_2 )/ \approx\,.
$$
By definition, since $ \mathcal U_1$ and  $\mathcal U_2 $ are open, we have that  the interiors $\text{int}(A)$ and $\text{int}(B)$ of
$A$ and $B$ form an open covering of $\hat X$. By the reduced Mayer-Vietoris exact sequence
$$
\begin{aligned}
\dots \rightarrow H_1(A\cap B, \R) &\rightarrow  H_1(A, \R) \oplus H_1(B, \R) \rightarrow H_1(\hat X, \R)  \\ &\rightarrow  \tilde{H}_0(A\cap B, \R) \rightarrow  \tilde{H}_0(A, \R) \oplus \tilde{H}_0(B, \R) \rightarrow \dots
\end{aligned}
$$
Since $A\cap B$ is homotopically trivial in $B$, the inclusion $H_1(A\cap B, \R) \to H_1(B, \R)$ is trivial. Let then  $\imath^{A\cap B,A}_*: H_1(A\cap B, \R) \to H_1(A, \R)$ the 
homology homomorphism induced by the inclusion $\imath^{A\cap B,A}: A\cap B \to A$. We have that 
$$
\text{dim } H_1(A, \R) \geq \text{dim } H_1(X, \R) +  \text{dim } \text{Im}(\imath^{A\cap B,A}_*)\,.
$$
In fact, since $A\cap B= (\mathcal U_1 \setminus \bar D_1) \sqcup (\mathcal U_2 \setminus \bar D_2)$, it follows that 
$$H_1(A\cap B, \R)  \approx H_1(\mathcal U_1 \setminus \bar D_1, \R) \oplus H_1(\mathcal U_2 \setminus \bar D_2, \R)$$ 
with  $H_1(\mathcal U_1 \setminus \bar D_1, \R)$ and $H_1(\mathcal U_1 \setminus \bar D_1, \R)$ both injecting trivially into $H_1(X,\R)$,
 since $\mathcal U_1$ and $\mathcal U_2$ are homotopically trivial in $X$. Alternatively we can reason as follows. By the reduced homology long exact sequence
$$
\begin{aligned}
\dots \to H_1(\mathcal U_1 \cup \mathcal U_2, \R) &\to  H_1(X, \R) \to H_1 (X, \mathcal U_1 \cup \mathcal U_2; \R) \\ &\to \tilde H_0(\mathcal U_1 \cup \mathcal U_2, \R) \to  \tilde H_0(X, \R)  \to \dots\,.
\end{aligned}
$$
Since $\mathcal U_1 \cup \mathcal U_2$ is a disjoint union of homotopically trivial sets, we have that the homology 
$H_1(\mathcal U_1 \cup \mathcal U_2, \R)=\{0\}$ and $\text{dim }\tilde H_0(\mathcal U_1 \cup \mathcal U_2, \R)=1$, since $X$ is path-connected we have that $ \tilde H_0(X, \R)=\{0\}$, hence 
$$
\text{dim }H_1 (X, \mathcal U_1 \cup \mathcal U_2; \R)= \text{dim }H_1 (X, \R) + 1\,.
$$
By the excision property 
$H_\ast (X, \mathcal U_1 \cup \mathcal U_2; \R)$ is isomorphic to $H_\ast (A, A\cap B; \R)$. We then examine the  reduced homology long exact sequence
$$
\dots \to H_1(A\cap B, \R) \to  H_1(A, \R) \to H_1 (A, A\cap B; \R) \to \tilde H_0(A\cap B, \R) \to \dots\,.
$$
Now we have that since $A\cap B = \mathcal U_1 \setminus \bar D_1 \sqcup \mathcal U_2 \setminus \bar D_2$, and $ \text{dim } \tilde H_0(A\cap B, \R)=1$, hence 
$$
\begin{aligned}
 \text{dim } H_1(A, \R) &\geq    \text{dim } \text{Im}(\imath^{A\cap B,A}_*) +  \text{dim } H_1 (A, A\cap B; \R) -1 \\ &=  \text{dim } H_1 (X, \R)+ \text{dim } \text{Im}(\imath^{A\cap B,A}_*)\,.
\end{aligned}
$$

Finally, since $A\cap B$ has two path-connected components, while $A$ and $B$ are path-connected, we have that 
$$
\text{dim } \tilde{H}_0(A\cap B, \R) = 1 \quad \text{ and } \quad    \text{dim } \tilde{H}_0(A, \R) =  \text{dim } \tilde{H}_0(B, \R)=0\,.
$$
The statement follows since by the Mayer-Vietoris exact sequence
$$
\begin{aligned}
\text{dim } H_1(\hat X , \R) &\geq \text{dim } \tilde{H}_0(A\cap B, \R) + \text{dim } H_1(A, \R)  \\ &-  \text{dim } \text{Im}(\imath^{A\cap B,A}_*) \geq \text{dim } H_1(X, \R) +1\,.
\end{aligned}
$$
\end{proof} 

\noindent The dimension of the homology of a simple branched surface can be bounded from below in terms the homology of the underlying topological surface and of the cardinality of the covering. More precisely we have
\begin{lemma} 
\label{lemma:Betti_bound}
Let $\Sigma$ be a closed simple branched surface given by data $(\Sigma_0, \mathcal D, \sigma)$ with $\Sigma_0$ a closed orientable surface, 
$\mathcal D$ a finite covering (of cardinality $\# \mathcal D$) of $\Sigma_0$ by homotopically trivial, closed, neighborhood deformation retracts and $\sigma$ 
an involution on the set $\mathcal D$.  Then
$$
\text{\rm dim } H_1(\Sigma, \R) \geq  \text{\rm dim } H_1(\Sigma_0, \R)  + \frac{1}{2} ( \# \mathcal D  )\,.
$$
\end{lemma}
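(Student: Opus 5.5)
The bound will come from induction on the number of identified pairs, applying Lemma~\ref{lemma:Betti_induction_step} once per pair. The involution $\sigma$ on $\mathcal D$ has no fixed points, since $\sigma(\bar D)\cap \bar D=\emptyset$. Hence $\mathcal D$ splits into $k=\frac12 \#\mathcal D$ unordered pairs $\{\bar D_j, \sigma(\bar D_j)\}$, $j=1,\dots,k$. I set $X_0:=\Sigma_0$, and let $X_j$ be the quotient of $X_{j-1}$ obtained by gluing $\bar D_j$ to $\sigma(\bar D_j)$ via $\phi_{\bar D_j}$. Then $X_k=\Sigma$, since the equivalence relation $\approx_{\mathcal D}$ is generated by these pairwise identifications. (If $\approx_{\mathcal D}$ is not already transitive, it should be read as the equivalence relation it generates.) The goal is to show $\dim H_1(X_j,\R)\geq \dim H_1(X_{j-1},\R)+1$ at each step; summing these gives the claim.

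At step $j$ I need to check the hypotheses of Lemma~\ref{lemma:Betti_induction_step} for $X=X_{j-1}$ and the images $\pi(\bar D_j)$, $\pi(\sigma(\bar D_j))$ under the quotient map $\pi:\Sigma_0\to X_{j-1}$.

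\emph{Closedness and compactness.} These images are closed because $\pi$ is a closed map from a compact space to a Hausdorff one (see the Remark).

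\emph{Disjointness.} The images must be disjoint. This is where the compatibility condition $\bar D\cap\bar D'\neq\emptyset\Rightarrow \sigma(\bar D)\cap\sigma(\bar D')=\emptyset$ is used. If $\pi(\bar D_j)$ met $\pi(\sigma(\bar D_j))$, some point of $\bar D_j$ would be identified, through earlier gluings, with a point of $\sigma(\bar D_j)$. Tracing such a chain of identifications through earlier pairs produces overlapping sets whose $\sigma$-images also overlap, which the compatibility condition forbids. This combinatorial check is routine but fiddly.

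\emph{Homotopically trivial neighborhoods.} Since each $\bar D$ is a homotopically trivial neighborhood deformation retract, it has an open neighborhood $\mathcal U$ in $\Sigma_0$ that deformation retracts onto $\bar D$. That neighborhood is then homotopically trivial in $\Sigma_0$, and its image in $X_{j-1}$ is homotopically trivial as well, because any loop in it can be contracted in $\Sigma_0$ and pushed forward by $\pi$.

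\emph{Path-connectedness.} If $\Sigma_0$ is not connected, each $X_{j-1}$ is a finite union of path components, and I will apply the lemma componentwise. When the two glued sets lie in different components, the gluing does not raise $b_1$ and the mechanism in the lemma's proof fails. I will handle this case by using the reduced $H_0$ term in Mayer--Vietoris instead, or by restricting to connected $\Sigma_0$, which is the case used in the paper's application. Given a path-connected $X$ in which both sets lie, the lemma yields the increment of $1$.

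The main obstacle I anticipate is the homotopical triviality of the neighborhoods after earlier gluings. A neighborhood of $\pi(\bar D_j)$ in $X_{j-1}$ may contain images of other sets $\bar D'$ that meet $\bar D_j$ and have already been glued elsewhere. A loop could then leave through such a glued set and return by a different route, and that loop is not pushed forward from $\Sigma_0$. My first attempt will use the compatibility condition to choose the neighborhoods $\mathcal U$ small, and to order the pairs so that at step $j$ the loops inside $\pi(\mathcal U_j)$ lift to $\Sigma_0$. If this fails in general, the fallback is to work with a single Mayer--Vietoris argument over all $k$ pairs at once. There $A$ is $\Sigma_0$ minus the glued sets, and $B$ is a union of thickened glued sets whose overlaps have $\tilde H_0$ of rank at least $k$. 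The boundary map from this $\tilde H_0$ term contributes at least $k$ independent classes to $H_1(\Sigma,\R)$, in addition to those surviving from $H_1(\Sigma_0,\R)$.
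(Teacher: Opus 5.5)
\textbf{There is a genuine gap, and it cannot be closed.} Your route is the paper's route: peel off one pair $\{\bar D,\sigma(\bar D)\}$ at a time and apply Lemma~\ref{lemma:Betti_induction_step}. The step you call ``routine but fiddly'' is false. That step is the claim that the images of $\bar D_j$ and $\sigma(\bar D_j)$ in $X_{j-1}$ are disjoint. With it, the inequality itself fails.

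The compatibility condition only constrains two sets at a time. Identifications, however, can close up through a chain of three pairs. Here is a counterexample.
\begin{itemize}
\item Take $\Sigma_0=S_1\sqcup S_2\sqcup S_3$, three $2$-spheres; the Remark allows a disconnected $\Sigma_0$.
\item Cut each sphere along an equator into closed hemispheres: $S_1=A_1\cup B_1$, $S_2=A_2\cup C_2$, $S_3=B_3\cup C_3$.
\item Let $\sigma$ swap $A_1\leftrightarrow A_2$, $B_1\leftrightarrow B_3$, $C_2\leftrightarrow C_3$.
\item Choose the gluing homeomorphisms so that $\phi_{C_2}\circ\phi_{A_1}=\phi_{B_1}$ on the equator of $S_1$.
\end{itemize}
All six sets are closed disks, and $\sigma(\bar D)\cap\bar D=\emptyset$. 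Compatibility holds for distinct sets, which is the only sensible reading. The intersecting pairs are $\{A_1,B_1\}$, $\{A_2,C_2\}$, $\{B_3,C_3\}$, and their $\sigma$-images lie on different spheres.

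The quotient $\Sigma$ is three disks glued along a common boundary circle. So $\dim H_1(\Sigma,\R)=0$, while the claimed bound is $0+\frac12\cdot 6=3$.

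Run your induction on this example, in any order of the pairs.
\begin{itemize}
\item The first two gluings join different components, so $b_1$ does not increase.
\item After them, the equators of the two spheres carrying the last pair are already identified through the third sphere.
\item So the last pair's images meet along a whole circle. That gluing lowers $b_2$ from $3$ to $2$ instead of raising $b_1$.
\end{itemize}

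This is not a peripheral case you can set aside.
\begin{itemize}
\item \emph{It happens in the application.} The same chain occurs along every curve of the cut locus where three sheets meet, namely sheets equidistant from nearby boundary points $p_1,p_2$, from $p_2,p_3$, and from $p_1,p_3$. Once two of the three pairs are glued, the third pair's sets already share the preimages of that curve. This is exactly the situation of Lemma~\ref{lemma:simple_branched}.
\item \emph{The application's $\Sigma_0$ is disconnected.} There $\Sigma_0=\partial\hat S_r(M)$ is a disjoint union of $\#C_r$ tori. So restricting to connected $\Sigma_0$ does not cover the case the paper needs, contrary to what you assume.
\item \emph{The reduced $H_0$ term does not help.} When the glued sets lie in different components, that term records a drop in $b_0$, not a new class in $H_1$.
\item \emph{The fallback does not help.} Since $\mathcal D$ covers $\Sigma_0$, your $A$ (``$\Sigma_0$ minus the glued sets'') is empty. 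In any case, no Mayer--Vietoris bookkeeping can produce $k$ new classes when the example above has none.
\end{itemize}

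The paper's own proof has the same gap. It asserts that $\bar D^*$ and $\sigma(\bar D^*)$ remain homotopically trivial neighborhood deformation retracts in $\Sigma'$. It never checks that their images in $\Sigma'$ are disjoint, or that $\Sigma'$ is path-connected, and Lemma~\ref{lemma:Betti_induction_step} requires both. A correct version would need a hypothesis that controls chains of identifications, not just pairs of overlapping sets.
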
 
\begin{proof} It follows from Lemma \ref{lemma:Betti_induction_step} by induction on the cardinality $\# \mathcal D$ of the covering. 
In fact,  let $\bar D_0 \in \mathcal D$ and let $\mathcal D' =
\mathcal D \setminus \{ \bar D^*, \sigma (\bar D^*)\}$.  The data $(\Sigma_0, \mathcal D', \sigma\vert \mathcal D')$ defined a simple branched
surface $\Sigma'$ such that $\Sigma$ can be obtained from $\Sigma'$ by identifying the closed sets $\bar D^* \in \mathcal D$ and 
$\sigma (\bar D^*)$ via the homeomorphism $\phi_{\bar D^*}$.  Since $\bar D^*$ and $\sigma (\bar D^*)$ are homotopically trivial  neighborhood deformation 
retracts in $\Sigma_0$, by the definition of simple branched surface, in particular by the condition that $\sigma (\bar D) \cap \sigma (\bar D') =\emptyset$ 
whenever $\bar D \cap \bar D' \not = \emptyset$, for all $\bar D, \,,\bar D' \in \mathcal D$, the sets $\bar D^*$ and $\sigma (\bar D^*)$ are also homotopically trivial  
neighborhood deformation retracts in $\Sigma'$. Thus by Lemma \ref{lemma:Betti_induction_step} we have
$$
\text{dim} H_1(\Sigma, \R) \geq \text{dim} H_1(\Sigma', \R)  + 1 \,.
$$
Since $\# \mathcal D' =  \#\mathcal D -2$, the stated lower bound on $\text{dim} H_1(\Sigma, \R)$ follows from the induction hypothesis and the
argument is therefore complete.

\end{proof}

\begin{lemma} 
\label{lemma:simple_branched}
If the geodesic flow $\{ G_t\}$ has no periodic orbits,  then the cut-locus $\mathcal C(s)$ is homeomorphic to a simple  branched closed surface.
\end{lemma}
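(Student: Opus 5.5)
We prove the lemma by realising $\mathcal C(s)$, for $s>s_0$, as the quotient of the disjoint union of tori under the map $\pi_s$. Take $\Sigma_0 := \T^2_1(s)\sqcup\dots\sqcup\T^2_c(s)$, the union of the outer boundary tori $\partial^{(+)}\overline{\mathcal N_i}(s)$ of the tubular neighbourhoods given by Lemma~\ref{lemma:cutlocus}. It is a closed orientable (possibly disconnected) surface, and $\pi_s:\Sigma_0\to\mathcal C(s)$ is a continuous surjection. Since $\Sigma_0$ is compact and $\mathcal C(s)$ is Hausdorff (it is closed in the manifold by Lemma~\ref{lemma:cutlocus}), $\pi_s$ is a closed map. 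Hence $\mathcal C(s)$ is homeomorphic to $\Sigma_0/\!\sim$, where $x\sim y$ iff $\pi_s(x)=\pi_s(y)$. It then remains to show that $\sim$ has the form $\approx_{\mathcal D}$ of Definition~\ref{def:simple_branched_surface}.

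To build the cover $\mathcal D$, observe that a point $x\in\T^2_i(s)$ corresponds to the endpoint of a minimizing geodesic from $\partial\hat S_r(M)$. The fibre $\pi_s^{-1}(\pi_s(x))$ has cardinality $\#\Gamma_s(\pi_s(x),\partial\hat S_r(M))$, and by the lemma bounding $\#\Gamma_s$ this is finite. Over $\mathcal C^\ast(s)$ the map $\pi_s$ is exactly two-to-one. By Lemma~\ref{lemma:smooth} each connected component $D$ of $\mathcal C^\ast(s)$ is a smooth surface with piecewise smooth boundary. Its preimage therefore splits into two closed sheets $\bar D^{(1)},\bar D^{(2)}\subset\Sigma_0$, each mapped homeomorphically onto $\bar D$ by $\pi_s$: the two minimizing geodesics depend continuously on the point and can be tracked by the two branches $d_s(\cdot,C_1)$ and $d_s(\cdot,C_2)$.

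We take $\mathcal D$ to be the family of all such sheets, with $\sigma(\bar D^{(1)})=\bar D^{(2)}$ and $\phi_{\bar D^{(1)}}=(\pi_s|_{\bar D^{(2)}})^{-1}\circ\pi_s|_{\bar D^{(1)}}$. Then $\phi_{\sigma(\bar D)}=\phi_{\bar D}^{-1}$ holds by construction. The remaining properties are checked as follows.

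\emph{Homotopy triviality and NDR property.} Lemma~\ref{lemma:homotopy_trivial} shows that each $\bar D$ is homotopically trivial and is a closed disk with holes bounded by piecewise smooth curves. It is therefore a neighbourhood deformation retract in the torus.

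\emph{Finiteness.} There are only finitely many components, by compactness and because the singular strata are $1$-dimensional compact manifolds (Lemma~\ref{lemma:smooth}).

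\emph{Covering.} The sheets cover $\Sigma_0$, because $\mathcal C^\ast(s)$ is dense in $\mathcal C(s)$. The strata of multiplicity $\ge 3$ are of dimension at most one, and each of their points is a limit of points with exactly two minimizers.

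\emph{Disjointness conditions.} $\sigma(\bar D)\cap\bar D=\emptyset$ holds because the two minimizers at points of $D$ are distinct, including at the boundary, where the distinctness is preserved by the strict-convexity argument. The condition $\bar D\cap\bar D'\neq\emptyset\Rightarrow\sigma(\bar D)\cap\sigma(\bar D')=\emptyset$ is obtained by examining a triple point $p$ with distinct minimizers $p_1,p_2,p_3$. The three sheets of $\Sigma_0$ over $p$ are pairwise distinct, and two adjacent components of $\mathcal C^\ast(s)$ meeting there share exactly one sheet; their partner sheets are the remaining two, which are distinct.

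Finally, we check that $\sim$ coincides with $\approx_{\mathcal D}$. Both relations identify $x$ and $y$ exactly when they are distinct preimages of a point of $\overline{\mathcal C^\ast(s)}=\mathcal C(s)$, with the understanding that at higher-multiplicity points the relation is generated transitively by the pairwise $\phi$'s.

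The main obstacle is this last point, together with the disjointness conditions at the singular strata. At points with three or more minimizers, $\pi_s$ identifies several sheets, and $\approx_{\mathcal D}$ as literally defined (pairs $(x,\phi_{\bar D}(x))$) must be read as the equivalence relation it generates. I would handle this by a local model near a triple curve. Near such a curve the cut locus is three half-planes meeting along a line, as for the equidistant set of three nearly Euclidean convex pieces; one checks that the generated relation has exactly the fibres of $\pi_s$ there. Points with four or more minimizers are isolated and are treated in the same way, by a cone over the local model.
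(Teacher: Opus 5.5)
Your construction is the paper's. The paper takes $\Sigma_0=\partial\hat S_r(M)$ with the foot-point map $H_s$; you take the outer tori $\partial^{(+)}\overline{\mathcal N_i}(s)$ with $\pi_s$, which is the same thing via the product structure of $\overline{\mathcal N_i}(s)$. In both versions the cover consists of the closures of the two sheets over each component of $\mathcal C^\ast(s)$, and the homeomorphism comes from a continuous surjection of a compact space onto a Hausdorff one. You are more careful than the paper on two points: you observe that $\approx_{\mathcal D}$ must be the equivalence relation generated by the $\phi_{\bar D}$, and you address the covering and disjointness conditions, which the paper's proof never checks.

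The gap is the step ``its preimage therefore splits into two closed sheets, each mapped homeomorphically onto $\bar D$''.
\begin{itemize}
\item Continuity of the two minimizers only makes $\pi_s^{-1}(D)\to D$ a two-sheeted covering. The paper's definition of $\mathcal C^\ast(s)$ explicitly allows ``possibly equal'' boundary tori. When both minimizers land on the same torus, the branches $d_s(\cdot,C_1)$ and $d_s(\cdot,C_2)$ give no global labelling. Since $D$ is only known to be a disk with holes, this covering may then be connected.
\item More seriously, nothing in your argument prevents a single component $D$ from containing two of the three wings at a triple curve, say those whose minimizers over a point $q$ are $\{x_1,x_2\}$ and $\{x_1,x_3\}$. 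However the local pieces are then distributed between the two sheets, one sheet closure contains two of $x_1,x_2,x_3$. So $\pi_s$ is not injective on it and $\phi_{\bar D}$ is not a homeomorphism. For one of the two distributions the two sheet closures even share $x_1$, violating $\sigma(\bar D)\cap\bar D=\emptyset$.
\item Your local model at a triple curve presupposes that the three wings lie in three distinct components.
\item The implication $\bar D\cap\bar D'\neq\emptyset\Rightarrow\sigma(\bar D)\cap\sigma(\bar D')=\emptyset$ is checked only over the triple point where $\bar D$ and $\bar D'$ meet. But $\sigma(\bar D)$ and $\sigma(\bar D')$ could meet over a different triple point.
\end{itemize}
What is missing is a global invariant on each component of $\mathcal C^\ast(s)$ that labels its two minimizers and separates the three wings at every triple curve. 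The index of the boundary torus does this only when the minimizers land on distinct tori. The paper's proof asserts the same splitting and homeomorphism without argument, so you inherited this deficiency rather than introduced it. Still, as written the lemma is not proved.
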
 
\begin{proof}
By Lemma \ref{lemma:homotopy_trivial}, the connected components of $\mathcal C^*(s)$ have  homotopically trivial closure in $\mathcal C(s)$.
Let $H_s: \partial \hat S_r(M) \to \mathcal C(s)$ denote the map defined as follows: for each 
$x \in \partial \hat S_r (M)$, let $H_s(x) \in \mathcal C(s)$ be the endpoint in $\mathcal C(s)$ of the geodesics starting at $x$, normal to the boundary and ending in
$\mathcal C(s)$.  

Let $\mathcal C^*(s) = \bigcup_{\alpha \in A} \mathcal C_\alpha(s)$ the finite decomposition of $\mathcal C^*(s)$
 into connected components. For every $\alpha \in A$,  we consider the two disjoint open sets  $D_{\alpha,1}(s)$  and $D_{\alpha,2}(s)$ in  $\partial \hat S_r(M)$ such that 
$$
D_{\alpha,1}(s) \cup D_{\alpha,2}(s) = H_s^{-1} (\mathcal C_\alpha(s) )\,.
$$
The map  $H_s \vert \bar D_{\alpha,i}(s) \to  \mathcal C (s)$ is a homeomorphism from the closure  $\bar D_{\alpha,i}(s)$ of  $D_{\alpha,i}(s) $
onto the closure $\bar {\mathcal C}_\alpha(s)$  of $\mathcal C_\alpha(s)$ for $i\in \{1,2\}$, hence for all $\alpha \in A$ the map
$$
H_s \circ  (H_s \vert \bar D_{\alpha,i}(s) )^{-1} : \bar D_{\alpha,i}(s) \to \bar D_{\alpha,j}(s) \,, \quad \text{ for all } i\not=j \in \{1, 2\}.
$$
is a homeomorphism. We then define a simple branched closed surface, in the sense of Definition~\ref{def:simple_branched_surface} as follows: let the 
$\Sigma_0= \partial \hat S_r(M)$;  let the cover $$\mathcal D_s: = \{ \bar D_{\alpha,1}(s) \}_{\alpha \in A} \cup  \{ \bar D_{\alpha,2}(s) \}_{\alpha \in A} $$ 
and let the involution $\sigma_s: \mathcal D_s \to \mathcal D_s$ be defined as 
$$
\sigma_s ( \bar D_{\alpha,i}(s)) =  \bar D_{\alpha,j}(s)  \,, \quad \text{ for all } i\not=j \in \{1, 2\}\,.
$$
The surface $\Sigma_0$ is a closed orientable surface, by Lemma \ref{lemma:smooth} and Lemma \ref{lemma:homotopy_trivial} the sets of the cover $\mathcal D_s$ 
are closed, homotopically trivial neighborhood deformation retracts, since any closed homotopically trivial subset with piece-wise smooth boundary of a smooth
orientable surface is a neighborhood deformation retract; the map $\sigma_s: \mathcal D_s \to \mathcal D_s$ is an involution by definition and the homeomorphisms
$$
\phi_{ \bar D_{\alpha,i}(s)} := H_s \circ  (H_s \vert \bar D_{\alpha,1}(s))^{-1} :\bar D_{\alpha,i}(s) \to \bar D_{\alpha,j}(s)
 \,, \quad \text{ for all } i\not=j \in \{1, 2\}\,,
$$
are  such that  
$$
\phi_{ \sigma_s(\bar D_{\alpha,i}(s))} = \phi_{ \bar D_{\alpha,j}(s)}  =  \phi^{-1}_{ \bar D_{\alpha,i}(s)}   \,, \quad \text{ for all } i\not=j \in \{1, 2\}\,.
$$
Finally the homeomorphism between the cut-locus and the simple branched surface $\Sigma(s)$ given by the triple $(\Sigma_0, \mathcal D_s, \sigma_s)$
defined above is induced by the map $H_s : \Sigma_0 \to \mathcal C(s)$. In fact, the branched surface $\Sigma$ is precisely defined by the relation $\approx$
such that for any $x,y \in \Sigma_0 = \partial \hat S_r(M)$, 
$$
x \approx y \Longleftrightarrow  H_s(x) = H_s(y) \,.
$$
Since $H_s$ is surjective, the induced map is a bijection, and since $H_s$ is continuous the induced map is continuous. As the branched surface $\Sigma(s)$ is
given by the a closed relation, it is a Hausdorff space, which is also compact since the surface $\Sigma_0$ is compact, hence the induced map is a homeomorphism. 
The argument is complete. 

\end{proof}

\begin{lemma}
\label{lemma:Betti_div}
If the geodesic flow $\{ G_t\}$ has no periodic orbits, then the Betti number of the cut-locus $\mathcal C(s)$ diverges: 
$$
\lim_{s\to +\infty} \text{\rm dim } H_1(\mathcal C(s), \R) = +\infty\,.
$$
\end{lemma}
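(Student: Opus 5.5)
By Lemma~\ref{lemma:simple_branched}, for $s$ large the cut-locus $\mathcal C(s)$ is homeomorphic to the closed simple branched surface $\Sigma(s)$ with data $(\Sigma_0,\mathcal D_s,\sigma_s)$. Here $\Sigma_0=\partial\hat S_r(M)$ is the disjoint union of $\#C_r$ tori, and $\mathcal D_s$ consists of the closures $\bar D_{\alpha,i}(s)$, $i\in\{1,2\}$, of the preimages under $H_s$ of the components $\mathcal C_\alpha(s)$ of $\mathcal C^*(s)$. Lemma~\ref{lemma:Betti_bound} then gives
$$
\text{\rm dim } H_1(\mathcal C(s),\R)\geq \text{\rm dim } H_1(\Sigma_0,\R)+\tfrac12\,\#\mathcal D_s = 2\,\#C_r + \#A_s ,
$$
where $A_s$ indexes the components of $\mathcal C^*(s)$. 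The statement therefore reduces to showing that the number $\#A_s$ of connected components of $\mathcal C^*(s)$ diverges as $s\to+\infty$.

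\textbf{Step 1: each piece has small area on the boundary tori.} Each $\bar D_{\alpha,i}(s)$ is a homotopically trivial closed subset of a boundary torus: it is the $H_s$-preimage of a disk-like sheet, by Lemma~\ref{lemma:homotopy_trivial} and the Addendum. A homotopically trivial region of a flat torus lifts isometrically to its universal cover, so it cannot contain a homotopically nontrivial loop. Its extent in the $R_s$-metric is therefore controlled by the injectivity data of the torus. In the coordinates $(\phi,\theta)$ of the boundary torus, the restriction of $R_s$ has $\Theta_s=e^{-s}\Theta$ of unit length. Hence the $\theta$-circle of Euclidean length $2\pi$ has $R_s$-length $2\pi e^{s}$, while the $\partial/\partial\phi$ direction has $R_s$-length of order $e^{s}$ or $1$ depending on $\xi$.

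I would then argue that each $\bar D_{\alpha,i}(s)$ has $R_s$-diameter bounded by a constant multiple of $d_s(\hat S_r(M),\partial\hat S_r(M))$, which tends to $0$ by Lemma~\ref{lemma:zerodiam}. The reason is that points of a single sheet $\mathcal C_\alpha(s)$ project, via $\mathcal R$, to interior geodesics of length at most $2d_s$ joining two boundary points. These are pairs lying in a contractible family, and by the proof of Lemma~\ref{lemma:homotopy_trivial} they stay in a conical neighbourhood of a fixed saddle-connection chain $\sigma$. The $\sigma$-data pins the boundary endpoints to a set of bounded $R_s$-size.

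\textbf{Step 2: covering count.} By the construction in Lemma~\ref{lemma:simple_branched}, the sets in $\mathcal D_s$ cover $\Sigma_0$, since every boundary point flows along its normal geodesic to $\mathcal C(s)$ and a.e.\ lands in $\mathcal C^*(s)$. Under $R_s$ the total area of $\Sigma_0$ is $e^{s}\sum_p \text{Area}_{\rm eucl}(\T^2_p)\to\infty$, because the $\Theta$-direction is stretched by $e^s$ and the $\phi$-direction is not shrunk. Each piece has area $O(d_s^2)$. Hence $\#\mathcal D_s\gtrsim e^{s}/d_s^2\to\infty$. An even cruder version of this step suffices: a bounded number of disks cannot cover a torus whose systole diverges, once each disk has bounded diameter.

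\textbf{Main obstacle.} I expect the main obstacle to be Step 1, the uniform diameter (or area) bound for the sets $\bar D_{\alpha,i}(s)$ in the $R_s$-metric on the boundary. A homotopically trivial subset of a long thin torus can still be a long strip, so contractibility alone is not enough. I would first try to obtain the bound from the lower bound on the interior geodesics' angle variation, $de^{-s}$, in the proof of Lemma~\ref{lemma:homotopy_trivial}: sheets are indexed by saddle-connection chains, and only boundedly many chains of bounded flat length fit into a fixed sheet. Failing a diameter bound, a weaker fallback is enough. It suffices to show that no single piece contains a loop that is essential on the torus, together with a systole argument. A finite cover of a torus by $N$ contractible closed sets whose pairwise intersections are controlled forces a lower bound on $N$ that grows with the ratio of the torus systole to the piece size. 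That ratio diverges because the systole grows like $e^{s}$ in the $\theta$-direction while $d_s\to0$.
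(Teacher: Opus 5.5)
\textbf{Verdict: genuine gap.} Your reduction matches the paper's: by Lemma~\ref{lemma:simple_branched} and Lemma~\ref{lemma:Betti_bound}, everything comes down to showing $\#A_s\to\infty$. The gap is in how you count.

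\textbf{Step~1 fails.} It is not merely unproven: in the boundary metric that drives Step~2 it is false. The $e^{s}$-stretching of $\Theta$ that makes the area of $\Sigma_0$ large stretches the footprints $\bar D_{\alpha,i}(s)$ just as much. To see this, take a saddle connection $\gamma$ from $p_i$ to $p_j$ of flat length $2L$. Let $v_\gamma$ be its unit tangent vector at the midpoint, and rotate it by a flat angle $\beta$ with $|\beta|\le Ke^{-s}$.
\begin{itemize}
\item Since $[X,Y]=0$, the cheapest path to the boundary over $p_j$ (resp.\ $p_i$) is the straight segment in the flat $(X,Y)$-leaf. Its $R_s$-length is about $L\sqrt{e^{-2s}+\beta^2}\le e^{-s}L\sqrt{1+K^2}$.
\item Undoing the rotation alone would cost $e^{s}|\beta|$, which is much larger. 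For $s$ large, no other cone point competes.
\item So the rotated vectors form an arc in the component of $\mathcal C^*(s)$ through $v_\gamma$.
\item Their footpoints over $p_j$ have a fixed $\phi$, because the projected path is always the second half of $\gamma$, and they have $\theta=\theta_\gamma+\beta$.
\item Hence they span a $\Theta$-segment of length about $2K$ in your torus metric, for every fixed $K$.
\end{itemize}
Thus $\mathrm{diam}_{R_s}\bar D_{\alpha,i}(s)$ is unbounded in $s$, not $O(d_s)$. The bound $de^{-s}$ in the proof of Lemma~\ref{lemma:homotopy_trivial} controls the turning along a single geodesic. It says nothing about the angular spread of a sheet, so it cannot ``pin'' the footprints.

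\textbf{Step~2 does not rescue it.} Suppose instead you use the formula $\partial/\partial\phi=(\alpha+1)\rho(\sin\xi\,X+\cos\xi\,Y)$ from the proof of Lemma~\ref{lemma:convex}. Then the $\phi$-direction has zero $R_s$-length at $\rho=0$, and the area in Step~2 is not large. Either way, the area comparison does not close.

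\textbf{The fallback is circular.} A torus of any systole can be covered by three topological disks if they are allowed to be long and thin. So contractibility of the pieces gives no lower bound on $\#\mathcal D_s$ without exactly the size control that fails.

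\textbf{The missing idea.} Use saddle connections to \emph{produce} sheets, not to bound their size; this is the paper's route.
\begin{itemize}
\item There are infinitely many saddle connections; otherwise some cone point would see an empty sector, giving flat disks of arbitrarily large radius.
\item Hence infinitely many join some fixed pair $p_i,p_j$.
\item For the $N$ shortest of these and $s$ large, the midpoint tangent vectors lie in $\mathcal C^*(s)$, with distance to the boundary realized exactly at $p_i$ and $p_j$.
\item A homotopy argument shows that distinct saddle connections give distinct components of $\mathcal C^*(s)$.
\end{itemize}
This gives $\#A_s\ge N$ directly, with no metric control on the pieces $\bar D_{\alpha,i}(s)$.
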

\begin{proof}  By Lemma \ref{lemma:simple_branched} the cut-locus $\mathcal C(s)$ is homeomorphic to a closed simple branched surface $\Sigma(s)$
defined as a quotient of the boundary surface $\Sigma_0 = \partial \hat S_r(M)$ with respect to the equivalence relation given by the closed covering 
$\mathcal D_s$ with elements the inverse images under the map $H_s: \Sigma_0 \to \mathcal C(s)$ of the  closure of the connected component of the regular 
subset $\mathcal C^*(s)$ of the cut-locus $\mathcal C(s)$. By Lemma \ref{lemma:Betti_bound} it is enough to prove that the cardinality of the cover $\mathcal D_s$ 
of the closed simple branched surface $\Sigma(s)$ diverges.

\smallskip
\noindent Let $\Gamma$ be the set of segments joining cone points. It can be proved that this set is infinite (countable). In fact, if it is finite
from every cone points there are only finitely many rays ending in a cone point. This implies that for any cone point there is an infinite  sector of given positive 
angle with vertex at the cone point which does not contain any cone points in its interior. This is not possible since it would imply that the surface contains 
euclidean disks of arbitrarily large radius, but every metric  disk of radius larger than the (finite) diameter of the surface contains all cone points.
Let $\Gamma^{(ij)} \subset \Gamma$ denote the subset of segments joining the cone points $p_i$ and~$p_j$. Since there are finitely many cone points, 
there exists a pair $(i,j)$ such that $\Gamma^{(ij)}$ is infinite. Let $\Gamma^{(ij)}_N=\{\gamma^{(ij)}_1, \dots, \gamma^{(ij)}_N\}$ denote the finite subset 
of $\Gamma^{(ij)}$ containing the first $N$ segments with respect to the total order given by their flat length. 

For $s$ large enough, for every $k\in \{1, \dots, N\}$,
the unit tangent vectors $v^{(ij)}_{k, \pm}$ based that the midpoint  $p^{(ij)}_k$ of $\gamma^{(ij)}_k$, parallel to the tangent vector of $\gamma^{(ij)}_k$, belong to 
the cut-locus $\mathcal C(s)$, and their minimum distance to the boundary is realized by the endpoints of the segment $\gamma^{(ij)}_{k, \pm}$ (which belong to the boundary 
components given by the cone points $p_i$ and $p_j$). In fact, all geodesics for the scaled metric $R_s$ starting at $v^{(ij)}_{k, \pm}$ of length smaller than  
$d_s(v^{(ij)}_{k, \pm}, \partial  \hat S_r(M))$ are contained in sector of aperture which converges to zero as $s$ diverges. Since, for $s$ large enough, the cone
points closest to $p^{(ij)}_k$ inside the sector are $p_i$ and $p_j$, it follows that the geodesics from $v^{(ij)}_{k, \pm}$ to $ \partial  \hat S_r(M)$ end in those points.

It remains to prove that the points $v^{(ij)}_{k, \pm}$ belong to distinct connected components of the regular subset $\mathcal C^*(s)$ of the cut-locus. We have
proved above that $v^{(ij)}_{k, \pm}\in \mathcal C^*(s)$ as the distance of $v^{(ij)}_{k, \pm}\in \mathcal C^*(s)$ is realized exactly by the radial unit tangent unit vectors
at the two conical points $p_i$ and $p_j$. If $v^{(ij)}_{k, \pm}$ and $v^{(ij)}_{k', \pm}$ belong to the same connected component of $\mathcal C^*(s)$, then the
paths $\gamma^{(ij)}_{k, \pm}$ and $\gamma^{(ij)}_{k', \pm}$ are homotopic within the set of paths of $R_s$-length less than $d_0$ joining the boundary components
$C(p_i)$ and $C(p_j)$ relative to the cone points $p_i$ and $p_j$. The projection of the image of the homotopy to the surface gives a connected region with no cone 
points in its interior and with the cone points $p_i$ and $p_j$ in its boundary. For $s>0$ large enough, any such connected region is contractible to a single segment
joining $p_i$ and $p_j$, therefore, taking into account the orientation, we conclude that $\gamma^{(ij)}_{k, \pm}=\gamma^{(ij)}_{k', \pm}$.

Thus the regular subset $\mathcal C^*(s)$ of the cut-locus has, for $s>0$ large enough, at least $N \in \N$ connected components, and since $N$ is arbitrary, we have proved
that the cardinality of the cover $\mathcal D_s$, which by Lemma \ref{lemma:simple_branched} defines the closed simple branched surface $\Sigma(s)$ homeomorphic to $\mathcal C(s)$, diverges, as claimed, hence by Lemma \ref{lemma:Betti_bound} the Betti number of $\mathcal C(s)$ diverges as well.

\end{proof} 

Finally, the following result implies that the topological complexity of the cut-locus is dominated by that of the manifold. 

Let $M$ be a smooth manifold with non-empty boundary.  The skeleton (or cut-locus) $\mathcal C$ of $M$ is defined 
as the set of points such that 
$$
\# \{y\in \partial M \vert d(x, \partial M) = d(x, y) \} > 1 \,.
$$
\begin{theorem} \cite{VEL78} 
\label{thm:rectraction} The closure $\overline{\mathcal C}$ of the cut-locus is a deformation retract of $M$, that is, there exists a continuous map $r: M \to \overline{\mathcal C}$
such that the restriction of $r$ to $\overline{\mathcal C}$ is the identity map of $\overline{\mathcal C}$ and which is homotopic to the identity map of $M$.

\end{theorem}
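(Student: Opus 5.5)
The plan is to build the retraction by flowing along the gradient of the distance function to the boundary. Let $f(x):=d(x,\partial M)$. Then $f$ is $1$-Lipschitz on $M$, vanishes exactly on $\partial M$, and is smooth on a collar near $\partial M$. More generally, $f$ is differentiable with $|\nabla f|=1$ at every point $x\notin \overline{\mathcal C}$: at such a point there is a unique foot point $y\in\partial M$ and a unique minimizing segment from $y$ to $x$, and that segment arrives at $x$ in the direction $\nabla f(x)$.

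The retraction itself comes from following minimizing segments outward. For $y\in\partial M$, let $\gamma_y$ be the geodesic leaving $y$ along the inward normal. Let $\tau(y)$ be the supremum of the times $t$ for which $\gamma_y|_{[0,t]}$ is still a minimizing segment to $\partial M$, i.e.\ $f(\gamma_y(t))=t$; this is the cut time of $y$. Every point $x\in M\setminus\overline{\mathcal C}$ lies on exactly one $\gamma_y$, with $0\le t=f(x)<\tau(y)$. The endpoint $\gamma_y(\tau(y))$ lies in $\overline{\mathcal C}$: past $\tau(y)$ minimality fails, and the usual argument shows the endpoint is either a point with two foot points or a focal point, and focal points are limits of cut points. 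I then define
$$
r(x):=\gamma_{y(x)}\bigl(\tau(y(x))\bigr)\ \text{ for } x\notin\overline{\mathcal C}\,,\qquad r(x):=x\ \text{ for } x\in\overline{\mathcal C}\,.
$$
The homotopy to the identity is the straight slide along each segment, $F_\lambda(x)=\gamma_{y(x)}\bigl((1-\lambda)f(x)+\lambda\tau(y(x))\bigr)$, with $F_\lambda$ fixing $\overline{\mathcal C}$ pointwise.

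The key steps, in order, are as follows.
\begin{enumerate}
\item Show that the foot-point map $x\mapsto y(x)$ is continuous on $M\setminus\overline{\mathcal C}$. This follows from uniqueness of the foot point together with compactness, by the same argument as in the proof of Lemma~\ref{lemma:cutlocus}.
\item Show that the cut time $\tau$ is continuous on $\partial M$. Lower semicontinuity comes from a limiting argument on minimizing segments. Upper semicontinuity is the standard cut-locus argument: at $\tau(y)$ one has either a conjugate (focal) point or two distinct minimizers, and both conditions persist under perturbation.
\item Combine the first two steps to get continuity of $r$ and of $F$ on $M\setminus\overline{\mathcal C}$.
\item Check continuity at points $x_0\in\overline{\mathcal C}$. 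If $x_n\to x_0$ with $x_n\notin\overline{\mathcal C}$, I must show $r(x_n)\to x_0$. The approach is to bound $\tau(y(x_n))-f(x_n)$ by a quantity that tends to $0$ as $x_n$ approaches $\overline{\mathcal C}$.
\end{enumerate}

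The fourth step is the main obstacle. Two minimizers from nearby boundary points may diverge only slowly, and focal points make $\tau$ and $\overline{\mathcal C}$ delicate; in particular $\mathcal C$ itself need not be closed, which is why the statement uses the closure. I would handle this by upper semicontinuity of $\tau$ combined with the observation that a limit of segments $\gamma_{y(x_n)}|_{[0,f(x_n)]}$ is a minimizer to $x_0$ that cannot be extended past $x_0$ while staying minimizing, since $x_0\in\overline{\mathcal C}$. This forces $\limsup_n \tau(y(x_n))\le f(x_0)$. The same estimate gives continuity of $F_\lambda$ at points of $\overline{\mathcal C}$, uniformly in $\lambda\in[0,1]$.

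In the setting of this paper, where $\partial\hat S_r(M)$ is compact and the metrics $R_s$ have bounded geometry, all of these compactness arguments are available. Under the no-focal-points conclusion of Lemma~\ref{lemma:cutlocus}, $\mathcal C(s)$ is closed, so the fourth step reduces to the first three.
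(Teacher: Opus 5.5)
The paper does not prove this statement. It is quoted from \cite{VEL78} and used as a black box in Corollary~\ref{cor:Betti_bounded}. Your argument is the classical one: the normal exponential map from $\partial M$, continuity of the cut time $\tau$, the retraction $x\mapsto\gamma_{y(x)}(\tau(y(x)))$, and the straight slide along segments as the homotopy. Under the hypotheses the citation leaves implicit ($M$ compact, metric smooth up to a smooth boundary), it is correct and even gives a strong deformation retraction. Spelling it out is a gain over the bare citation because it shows what is used; for instance, compactness cannot be dropped, since for the closed half-plane $\mathcal C=\emptyset$.

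Three points need tightening.

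\textbf{Semicontinuity in step 2.} Your labels are swapped. Limits of minimizing segments give $\limsup_n\tau(y_n)\le\tau(y)$, which is \emph{upper} semicontinuity; this is the direction you actually use in step 4. The focal-or-two-foot-points dichotomy gives \emph{lower} semicontinuity. In that argument the two alternatives are closed under limits, not stable under perturbation. The case where the two foot points of $\gamma_{y_n}(\tau(y_n))$ coalesce is excluded by local injectivity of the normal exponential map at $(y,T)$, where $T=\liminf_n\tau(y_n)<\tau(y)$, a non-focal point. Lower semicontinuity is what makes $U=\{\gamma_y(t): 0\le t<\tau(y)\}$ open, hence disjoint from $\overline{\mathcal C}$. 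That is the real content of your step-4 claim that a minimizer to $x_0\in\overline{\mathcal C}$ cannot be extended.

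\textbf{Focal points.} The claim ``focal points are limits of cut points'' is false as stated: inside a non-circular ellipse, the evolute meets $\overline{\mathcal C}$ only at its two cusps on the major axis. What you need is that an endpoint $x_0=\gamma_{y_0}(\tau(y_0))$ with a unique foot point lies in $\overline{\mathcal C}$. This is Bishop's theorem adapted to the boundary, and it follows from invariance of domain. Suppose a neighborhood $W$ of $x_0$ missed $\mathcal C$. Then $x\mapsto(y(x),f(x))$ would be a continuous injection of $W$ into $\partial M\times(0,\infty)$, so its image would be open and contain $(y_0,\tau(y_0))$. It would then contain some $(y_0,t)$ with $t>\tau(y_0)$, making $\gamma_{y_0}|_{[0,t]}$ minimizing, a contradiction. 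This fact is exactly what gives both $r(M)\subset\overline{\mathcal C}$ and $f(x)<\tau(y(x))$ for $x\notin\overline{\mathcal C}$.

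\textbf{Your closing remark about $(\hat S_r(M),R_s)$.} This is not covered by your proof. Bounded curvature in the interior does not supply a smooth Riemannian boundary. In the coordinates of Lemma~\ref{lemma:convex}, with $\xi=\theta-(\alpha+1)\phi$, one has $R_s(\partial_\phi,\partial_\phi)=(\alpha+1)^2\rho^2\bigl(e^{-2s}\sin^2\xi+\cos^2\xi\bigr)$, which vanishes at $\rho=0$.
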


\begin{corollary}
\label{cor:Betti_bounded}
 The first homology of  the cut-locus $H_1(\mathcal C(s), \R)$ has dimension at most  $2g(M) + \# C_r $, with $g(M)$ the genus of the surface $M$ and 
$\# C_r$ equal to the number of conical points.
\end{corollary}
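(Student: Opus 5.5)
Using Theorem~\ref{thm:rectraction}, I will reduce the corollary to computing the first Betti number of the phase space $\hat S_r(M)$, viewed as a compact manifold with boundary after blowing up the cone points. By Lemma~\ref{lemma:cutlocus}, for $s\geq s_0$ the cut-locus $\mathcal C(s)$ is closed, so $\overline{\mathcal C(s)}=\mathcal C(s)$. Theorem~\ref{thm:rectraction} applied to $(\hat S_r(M),R_s)$ then makes $\mathcal C(s)$ a deformation retract of $\hat S_r(M)$. The retraction $r$ satisfies $r\circ\iota=\mathrm{id}$, where $\iota$ is the inclusion, so $\iota_*$ is injective on $H_1(\cdot,\R)$. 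In fact, since $\iota\circ r$ is homotopic to the identity, it is an isomorphism. Hence
$$
\dim H_1(\mathcal C(s),\R)=\dim H_1(\hat S_r(M),\R).
$$

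Next I will bound $\dim H_1(\hat S_r(M),\R)$. The blown-up surface $\hat M_r$ is a compact orientable surface of genus $g=g(M)$ with $c=\# C_r$ boundary circles. It is homotopy equivalent to a wedge of $2g+c-1$ circles, and since $c\geq 1$ it has free fundamental group. The unit tangent bundle $\hat S_r(M)\to\hat M_r$ is a circle bundle over a surface with nonempty boundary, so it is trivial, and $\hat S_r(M)\cong \hat M_r\times\T$. By Künneth,
$$
\dim H_1(\hat S_r(M),\R)=\dim H_1(\hat M_r,\R)+1=2g+c-1+1=2g+c .
$$
This gives the bound $2g(M)+\# C_r$. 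One only needs the inequality, so a Mayer--Vietoris or Gysin sequence argument would also do: the fiber class contributes at most one dimension and the base contributes $2g+c-1$.

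The main obstacle is checking that Theorem~\ref{thm:rectraction} really applies in our setting. The manifold must be compact with smooth boundary, which holds after the blow-up since the boundary consists of the tori $\{p\}\times\T$ with the metric $R_s$ extended. The cut-locus used here, defined through $\Gamma_s$, must coincide with the skeleton of \cite{VEL78}, defined by multiple nearest boundary points. I will handle the second point using the results of the previous lemmas. For $s\geq s_0$ there are no focal points, and each minimizing geodesic to the boundary is determined by its endpoint. Therefore counting geodesics in $\Gamma_s(p,\partial\hat S_r(M))$ is the same as counting nearest boundary points, and the two definitions agree. The closedness of $\mathcal C(s)$ from Lemma~\ref{lemma:cutlocus} then removes the need to take a closure.
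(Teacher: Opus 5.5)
Your proposal is correct and follows the paper's proof. It uses closedness of $\mathcal C(s)$ from Lemma~\ref{lemma:cutlocus}, the retraction of Theorem~\ref{thm:rectraction}, and the count $\dim H_1(\hat S_r(M),\R)=2g(M)+\# C_r$, with only cosmetic differences: you use $\iota\circ r\simeq\mathrm{id}$ to get an isomorphism where the paper uses only $R_*\circ j_*=\mathrm{Id}$ to get surjectivity, and you get the Betti number from triviality of the circle bundle plus K\"unneth rather than from the Gysin sequence.

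One caution about your verification paragraph: $R_s$ does not extend nondegenerately to the blown-up boundary tori. The proof of Lemma~\ref{lemma:convex} gives $\partial/\partial\phi=(\alpha+1)\rho(\sin\xi\,X+\cos\xi\,Y)$ with $\xi=\theta-(\alpha+1)\phi$, so this vector tangent to the torus has $R_s$-length $(\alpha+1)\rho\,(e^{-2s}\sin^2\xi+\cos^2\xi)^{1/2}$, which tends to $0$ as $\rho\to 0$. The applicability of \cite{VEL78} therefore remains an assumption you share with the paper rather than something you have checked.
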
  
\begin{proof} By Lemma \ref{lemma:cutlocus} the cut-locus $\mathcal C(s)$ is closed, and by Theorem~\ref{thm:rectraction} it is a retraction of $ \hat S_r(M)$. 
The retraction map $R:  \hat S_r(M) \to \mathcal C(s)$ induces a surjective map $R_* : H_1( \hat S_r(M), \R) \to  H_1( \mathcal C(s), \R)$. In fact,  let $j: \mathcal C(s) \to
 \hat S_r(M) $ denote the inclusion.  By definition of retraction $R \circ j = \text{Id}_{ \mathcal C(s)}$  (and $R$ is homotopic to $\text{Id}_{\hat S_r(M)}$).  It follows that 
 $R_* \circ j_* =  \text{Id}_{ \mathcal C(s)}$ hence $R_* : H^1( \hat S_r(M), \R) \to  H_1( \mathcal C(s), \R)$ has a right inverse and therefore is surjective.
 The dimension of the real vector space  $H^1( \hat S_r(M), \R)$ is at most equal to  $2g(M) + \# C_r $.  Indeed, the by the Gysin exact sequence the dimension of the first cohomology 
 can be computed as follows
 $$
 \text{ \rm dim } H^1( \hat S_r(M), \R) =   \text{ \rm dim } H^1(M\setminus C_r, \R) + 1= (2g(M) + \# C_r -1) + 1 \,.
 $$
 \end{proof} 
\noindent We are then ready to conclude.

\begin{proof}[Proof of Theorem \ref{thm:B}]  By Lemma \ref{lemma:Betti_div}  if the the geodesic flow $\{G_t\}$ has no periodic orbits, then the Betti number $\text{\rm dim } H_1( \mathcal C(s), \R)$ of the cut-locus  diverges. However, by Corollary \ref{cor:Betti_bounded} the Betti number is bounded in terms of the topology of the flat surface and of the cardinality of the set of is conical points. This contradiction proves that $\{G_t\}$ has at least one periodic orbit.

\end{proof}.

\bibliography{biblio}

@incollection {Sm99,
    AUTHOR = {Smillie, J.},
     TITLE = {The dynamics of billiard flows in rational polygons},
    BOOKTITLE = {Dynamical Systems, Ergodic Theory and Applications},
    SERIES = {Encyclopedia of Mathematical Sciences},
    VOLUME={100},
    EDITOR= {Ya. G. Sinai},
   PAGES = {360--380}, 
   PUBLISHER = {Springer Verlag},
   ADDRESS = {Berlin, Heidelberg},
      YEAR = {2000},}

@article {GKT,
    AUTHOR = {Galperin, G. and Kr\"uger, T. and Troubetzkoy, S.},
     TITLE = {Local Instability of Orbits in Polygonal and Polyhedral Billiards},
   JOURNAL = {Comm. Math. Phys.},
  FJOURNAL = {Communications in Mathematical Physics},
    VOLUME = {169},
      YEAR = {1995},
    NUMBER = {3},
     PAGES = {463--473},
}

@article {Fl92,
    AUTHOR = {Flaminio, L.},
     TITLE = {Une remarque sur les distributions invariantes par les flots
              g\'eod\'esiques des surfaces},
   JOURNAL = {C. R. Acad. Sci. Paris S\'er. I Math.},
  FJOURNAL = {Comptes Rendus de l'Acad\'emie des Sciences. S\'erie I.
              Math\'ematique},
    VOLUME = {315},
      YEAR = {1992},
    NUMBER = {6},
     PAGES = {735--738},
}

@incollection {Fo02b,
    AUTHOR = {Forni, G.},
     TITLE = {Asymptotic Behaviour of Ergodic Integrals of
                  Renormalisable Parabolic Flows},
 BOOKTITLE = {Proceedings of the ICM 2002},
    VOLUME = {III},
     PAGES = {317--326},
 PUBLISHER = {Higher Education Press},
   ADDRESS = {Beijing, China},
      YEAR = {2002},}

@book {F,
    AUTHOR = {Furstenberg, H.},
     TITLE = {Recurrence in Ergodic Theory and Combinatorial Number Theory },
    SERIES = {M. B. Porter lectures ; 1978},
 PUBLISHER = {Princeton Univ. Press},
   ADDRESS = {Princeton, NJ},
      YEAR = {1981},
     PAGES = {vii + 199},
     }

@article {GK80,
    AUTHOR = {Guillemin, V. and Kazhdan, D.},
     TITLE = {Some inverse spectral results for negatively curved
              {$2$}-manifolds},
   JOURNAL = {Topology},
  FJOURNAL = {Topology. An International Journal of Mathematics},
    VOLUME = {19},
      YEAR = {1980},
    NUMBER = {3},
     PAGES = {301--312},
}

@article {KMS,
    AUTHOR = {Kerckhoff, S. and Masur, H. and Smillie, J.},
     TITLE = {Ergodicity of billiard flows and quadratic differentials},
   JOURNAL = {Ann. of Math. (2)},
  FJOURNAL = {Annals of Mathematics. Second Series},
    VOLUME = {124},
      YEAR = {1986},
    NUMBER = {2},
     PAGES = {293--311},
     }

@book {ST,
    AUTHOR = {Singer, I. M. and Thorpe, J. A.},
     TITLE = {Lecture notes on elementary topology and geometry},
      NOTE = {Reprint of the 1967 edition,
              Undergraduate Texts in Mathematics},
 PUBLISHER = {Springer-Verlag},
   ADDRESS = {New York},
      YEAR = {1976},
     PAGES = {viii+232},
}

@article {Tr86,
    AUTHOR = {Troyanov, M.},
     TITLE = {Les surfaces euclidiennes \`a singularit\'es coniques},
   JOURNAL = {Enseign. Math. (2)},
  FJOURNAL = {L'Enseignement Math\'ematique. Revue Internationale. IIe
              S\'erie},
    VOLUME = {32},
      YEAR = {1986},
    NUMBER = {1-2},
     PAGES = {79--94},
}

@article{Trb05,
     author={Troubetzkoy, S.},
     title = {Periodic billiard orbits in right triangles},
     journal = {Annales de l'Institut Fourier},
     pages = {29--46},
     publisher = {Association des Annales de l{\textquoteright}institut Fourier},
     volume = {55},
     number = {1},
     year = {2005},
     doi = {10.5802/aif.2088},
     zbl = {1063.37022},
     mrnumber = {2141287},
     language = {en},
     url = {https://aif.centre-mersenne.org/articles/10.5802/aif.2088/}
}

@article {Ma86,
    AUTHOR = {Masur, H.},
     TITLE = {Closed trajectories for quadratic differentials with an application to billiards},
   JOURNAL = {Duke Math. J.},
  FJOURNAL = {Duke Mathematical Journal},
    VOLUME = {53},
      YEAR = {1986},
    NUMBER = {2},
     PAGES = {307--314},
     }

@article {Ve93,
    AUTHOR = {Veech, W. A.},
     TITLE = {Flat surfaces},
   JOURNAL = {Amer. J. Math.},
  FJOURNAL = {American Journal of Mathematics},
    VOLUME = {115},
      YEAR = {1993},
    NUMBER = {3},
     PAGES = {589--689},
}

@unpublished{Ka04, 
    AUTHOR = {Katok, A.},
     TITLE = {Five most resistant problems in dynamics},
    NOTE={MSRI-Evans Lecture, Berkeley.  27 September 2004.  See https://www.msri.org/workshops/267/schedules/1789 and
    http://akatok.s3-website-us-east-1.amazonaws.com/pub/5problems-expanded.pdf.
    (Expanded version of a Lecture at IMPAN,  see https://www.impan.pl/~biuletyn/arch/5problems.pdf)},
    }

@incollection {Thu98,
    AUTHOR = {Thurston, W. P.},
     TITLE = {Shapes of polyhedra and triangulations of the sphere},
 BOOKTITLE = {The Epstein birthday schrift},
    SERIES = {Geom. Topol. Monogr.},
    VOLUME = {1},
     PAGES = {511--549 (electronic)},
 PUBLISHER = {Geom. Topol. Publ., Coventry},
      YEAR = {1998},
}

@article{VEL78,
author = {Vainshtein, A. G.  and  Efremovich, V. A. and Loginov, E. A.},
title = {The skeleton of a {R}iemannian manifold with boundary},
journal = {Russian Mathematical Surveys},
volume = {33},
number = {3},
pages = {181--182},
doi = {10.1070/RM1978v033n03ABEH002488},
url = {https://dx.doi.org/10.1070/RM1978v033n03ABEH002488},
year = {1978},
}

@article{Fa,
author = {Fagnano, G.},
title = {Problemata quaedam ad methodum maximorum et minimorum spectantia},
journal = {Nova Acta Eruditorum},
volume = { },
number = {},
pages = {281--303},
year = {1775},
}

@article{Sc06,
author = {R. E. Schwartz},
title = {Obtuse Triangular Billiards {I}: Near the $(2, 3, 6)$ Triangle},
journal = {Experimental Mathematics},
volume = {15},
number = {2},
pages = {161--182},
year = {2006},
publisher = {Taylor \& Francis},
doi = {10.1080/10586458.2006.10128961},
URL = {https://doi.org/10.1080/10586458.2006.10128961},
eprint = {https://doi.org/10.1080/10586458.2006.10128961}}

@article{Sc08,
author = {Schwartz, R. E.},
title = {Obtuse Triangular Billiards {II}: One Hundred Degrees Worth of Periodic Trajectories},
journal = {Experimental Mathematics},
volume = {18},
number = {2},
pages = {137--171},
year = {2009},
publisher = {Taylor \& Francis},
doi = {10.1080/10586458.2009.10128891},
URL = {https://doi.org/10.1080/10586458.2009.10128891},
eprint = {https://doi.org/10.1080/10586458.2009.10128891}
}

@incollection {Sc_ICM,
    AUTHOR = {Schwartz, R. E.},
     TITLE = {Survey Lecture on billiards},
    BOOKTITLE = {International Congress of Mathematicians 2022, Vol. IV},
    editors ={Beliaev, D. and Smirnov, S. editors}, 
   PAGES = {2392--2429}, 
   PUBLISHER = {EMS Press},
   ADDRESS = {},
      YEAR = {2023},}

@unpublished {TGMM,
  AUTHOR = {Tokarsky, G. and Garber, J. and Marinov, B. and Moore, K.},
   TITLE = {One Hundred and Twelve Point Three Degree Theorem}, 
   NOTE = {preprint available at arXiv:1808.06667v1}, 
   PAGES = {1--30},
 YEAR = {2018},
 }

@article{Mil,
author = {J. Milnor},
title = {Curvatures of left invariant metrics on {L}ie groups},
journal = {Advances in Mathematics},
volume = {21},
number = {3},
pages = {293-329},
year = {1976},
issn = {0001-8708},
doi = {https://doi.org/10.1016/S0001-8708(76)80002-3},
url = {https://www.sciencedirect.com/science/article/pii/S0001870876800023},
}
\bibliographystyle{amsplain}
\end{document}